\documentclass[11pt,reqno]{amsart}

\usepackage[T1]{fontenc}
\usepackage{lmodern}
\usepackage{microtype}
\usepackage{amsmath,amssymb,amsthm,mathtools,mathrsfs}
\usepackage[a4paper,margin=28mm,headheight=14pt]{geometry}
\usepackage{enumitem}
\usepackage{xcolor}
\usepackage[
  colorlinks=true,
  linkcolor=red,
  citecolor=blue,
  urlcolor=blue
]{hyperref}

\allowdisplaybreaks[2]
\numberwithin{equation}{section}
\setlist[enumerate]{label=\textup{(\roman*)},leftmargin=*,itemsep=3pt}

\newtheorem{theorem}{Theorem}[section]
\newtheorem{proposition}[theorem]{Proposition}
\newtheorem{lemma}[theorem]{Lemma}

\newtheorem*{conj}{Conjecture}
\theoremstyle{definition}
\newtheorem{definition}[theorem]{Definition}
\theoremstyle{remark}
\newtheorem{remark}[theorem]{Remark}
\newcommand{\OO}{\mathcal O}

\newcommand{\HH}{\mathcal H}
\newcommand{\CC}{\mathbb C}
\newcommand{\End}{\operatorname{End}}
\newcommand{\Hom}{\operatorname{Hom}}
\newcommand{\tr}{\operatorname{tr}}

\newcommand{\rk}{\operatorname{rk}}
\newcommand{\Mdr}{M_{\mathrm{dR}}}
\newcommand{\Mhod}{M_{\mathrm{Hod}}}
\newcommand{\ord}{\operatorname{ord}}
\newcommand{\Gr}{\operatorname{Gr}}

\title{Simpson's closedness conjecture in arbitrary rank}
\author[Tianzhi Hu]{Tianzhi Hu}
\address{
School of Mathematics and Statistics, Wuhan University, Wuhan, Hubei 430072, P.R. China
}
\email{hutianzhi@whu.edu.cn}
\date{}

\begin{document}

\begin{abstract}
For a compact Riemann surface $X$, Simpson associated to each stable graded Higgs bundle $(E,\theta)$ a locus $W_{[(E,\theta)]}^1 \subset M_{\mathrm{dR}}(X,n)$,
consisting of flat bundles that admit a Simpson filtration whose associated graded Higgs bundle is $(E,\theta)$. He conjectured that $W_{[(E,\theta)]}^1$ is Zariski closed in $M_{\mathrm{dR}}(X,n)$. We prove this conjecture in arbitrary rank.

The proof is based on the Quillen geometry of the determinant-of-cohomology
line bundle over the moduli of holomorphic bundles. To any holomorphic
family of flat bundles, we associate a determinant-of-cohomology line
bundle equipped with a canonical holomorphic determinant connection, and
derive explicit formulas for its curvature. For a family of flat bundles in
$W_{[(E,\theta)]}^1$, we then prove that the determinant connection form is
exact. This exactness forces the associated determinant frame to extend as a nowhere-vanishing frame across any one-parameter degeneration, which controls the limiting filtration and
yields the desired closedness.
\end{abstract}

\maketitle

\section{Introduction}\label{sec:assertion}

Let \(X\) be a compact Riemann surface of genus
\(g\geq2\).
Fix an integer \(n\geq1\). The non-abelian Hodge correspondence relates
reductive representations of \(\pi_1(X)\) to polystable degree-zero
Higgs bundles through harmonic metrics
\cite{UhlenbeckYau1986,Hitchin1987,Donaldson1987,Corlette1988,Simpson1992}.
It identifies the Dolbeault moduli space \(M_{\mathrm{Dol}}(X,n)\) of
Higgs bundles with the de Rham moduli space \(\Mdr(X,n)\) of flat
bundles as topological spaces, although their algebraic structures differ.

Simpson's Hodge moduli space \(\Mhod=\Mhod(X,n)\) places these two
spaces in an algebraic family \cite{Simpson1997,Simpson1994I}.
It parametrizes semistable degree-zero rank-\(n\) \(\lambda\)-connections
\(D_\lambda:V\to V\otimes K_X\), satisfying
\[
 D_\lambda(fs)=\lambda\,df\otimes s+fD_\lambda s.
\]
Its fibers at \(0\) and \(1\) are \(M_{\mathrm{Dol}}(X,n)\) and
\(\Mdr(X,n)\), respectively. Simpson introduces the following $\CC^*$-action on \(M_{\mathrm{Dol}}(X,n)\):
\[
 z\cdot(\lambda,V,\nabla_\lambda)
       =(z\lambda,V,z\nabla_\lambda),\qquad z\in\CC^*.
\]
On the Dolbeault fiber, this action scales the Higgs field.

The fixed points in \(M_{\mathrm{Dol}}(X,n)\) are represented by graded
Higgs bundles, or systems of Hodge bundles \cite{Simpson1992}.
Fix such a bundle \((E,\theta)\), assumed stable of degree zero, and write
\begin{equation}\label{eq:hodge}
 E=\bigoplus_{i=1}^{s}E_i,\qquad
 \theta_i:E_i\longrightarrow E_{i+1}\otimes K_X,\qquad
 \theta|_{E_s}=0,
\end{equation}
where \(1\leq s\leq n\) and
\(\sum_i\rk E_i=n\). The only nonzero blocks allowed for \(\theta\)
are the displayed adjacent maps. 

Following Simpson \cite{Simpson2010}, define the attracting fiber
\[
 W_{[(E,\theta)]}^1=
 \{[(V,D)]\in\Mdr(X,n):
       \lim_{z\to0}[(z,V,zD)]=[(E,\theta)]\},
\]
with the limit taken in \(\Mhod\). Equivalently, its points admit a
Simpson filtration: a Griffiths-transverse filtration whose associated
graded Higgs bundle is \((E,\theta)\)
\cite[Theorem~2.5 and Proposition~4.3]{Simpson2010}.
Simpson's foliation question
\cite[Question~7.4]{Simpson2010} asks whether the following closedness property holds:

\begin{conj}[Simpson]
For each stable degree-zero graded Higgs bundle \((E,\theta)\) of
rank \(n\), the attracting fiber \(W_{[(E,\theta)]}^1\) is Zariski closed
in \(\Mdr(X,n)\).
\end{conj}
If true, this closedness would imply a strong separatedness property for the Simpson filtration data and provide a canonical algebraic decomposition of the de Rham moduli space by fixed graded Higgs limits.

Simpson established local closedness and the Lagrangian property
\cite[Proposition~5.1 and Lemma~7.3]{Simpson2010}.
Collier--Wentworth obtained affine parametrizations through conformal
limits \cite{CollierWentworth2019}. Hu--Huang proved that the oper
stratum is the unique stratum of minimal dimension \cite{HuHuang2022};
this concerns strata rather than individual attracting fibers.
Schulz developed a WKB approach \cite{Schulz2023}, and Dimakis--Schulz
proved the closedness conjecture for every stable fixed point in rank two
\cite{DimakisSchulz2024}. 

We prove the following theorem, which gives an affirmative answer to Simpson's closedness conjecture.

\begin{theorem}\label{thm:main}
For every \(n\geq1\) and every stable $\mathrm{GL}(n,\CC)$ graded Higgs bundle
\((E,\theta)\) as in \eqref{eq:hodge}, the subset
\(W_{[(E,\theta)]}^1\) is Zariski closed in \(\Mdr(X,n)\).
\end{theorem}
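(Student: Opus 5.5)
Since \(W^1_{[(E,\theta)]}\) is already known to be locally closed and Lagrangian \cite[Proposition~5.1 and Lemma~7.3]{Simpson2010}, it is constructible and smooth of dimension \(\tfrac12\dim\Mdr(X,n)\). Closedness in the Zariski topology is therefore equivalent to closedness in the analytic topology. By the curve selection lemma, it suffices to show the following. Let \(t\mapsto[(V_t,D_t)]\) be a holomorphic one-parameter family over a disc \(\Delta\), with \([(V_t,D_t)]\in W^1_{[(E,\theta)]}\) for \(t\neq0\). Then the central point \([(V_0,D_0)]\) also admits a Simpson filtration with associated graded \((E,\theta)\). The strategy is to produce, on \(\Delta^*\), filtrations \(F^\bullet_t\) of \(V_t\) and to show that they extend across \(t=0\) to a Griffiths-transverse filtration of \(V_0\) with the right graded pieces. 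The tool for this is the determinant-of-cohomology line bundle and its determinant connection, as announced in the abstract.

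\textbf{Step 1: determinant line and connection.} Over the family, form \(\mathcal L=\det R\pi_*(V_t\otimes\xi)\), twisting by a fixed auxiliary bundle \(\xi\) as needed to make the index and the relevant determinants nondegenerate. Using the flat connections \(D_t\), equip \(\mathcal L\) with a canonical holomorphic connection \(\nabla^{\det}\), namely the Quillen connection adapted to the holomorphic structure induced by \(D_t\). I would compute its connection form \(A\) explicitly in terms of the variation of \(\bar\partial\)-operators, for example via a Quillen/Bismut–Freed type formula, and derive its curvature.

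\textbf{Step 2: exactness on the attracting fiber.} For \(t\neq0\), choose the Simpson filtration \(F^\bullet_t\), which is unique by stability, so that \(V_t\) degenerates to \(E=\bigoplus E_i\) under the \(\CC^*\)-action. The \(\CC^*\)-equivariance gives \(\det R\pi_*(V_t\otimes\xi)\cong\bigotimes_i\det R\pi_*(\Gr_i\otimes\xi)\), and the graded pieces \(\Gr_i\cong E_i\) are constant. The claim I would prove is that \(A\) restricted to \(\Delta^*\) is exact, \(A=d\log f\), with \(f\) the ratio of the determinant frame induced by the filtration to a fixed frame. The mechanism is that the Lagrangian property makes the curvature vanish on the family, and the equivariant structure then identifies the primitive explicitly.

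\textbf{Step 3: extension and limit filtration.} Because \(\nabla^{\det}\) and \(\mathcal L\) are defined on all of \(\Delta\) by construction, the frame \(s_t\) determined by \(F^\bullet_t\) satisfies \(\nabla^{\det}s_t=0\) up to the exact term. It therefore extends holomorphically and nowhere-vanishingly across \(0\): a pole or zero would contribute a residue \(\ord_0 f\,\frac{dt}{t}\) to \(A\), which is impossible since \(A\) is holomorphic on \(\Delta\). Next, take the flat (saturated) limit \(F^\bullet_0\) of the subbundles \(F^p_t\) inside \(V_0\), which exists by properness of Quot schemes. This limit is Griffiths transverse by closedness of that condition, and its graded pieces degenerate from \(E_i\) only by elementary modifications that lower degree. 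Each such modification would force \(s_t\) to vanish to order equal to the degree drop at \(t=0\). Nonvanishing then forces \(\Gr F^\bullet_0\cong E_i\) with the same \(\theta\). Stability of \((E,\theta)\), which is an open condition, shows that the graded Higgs bundle is exactly \((E,\theta)\). Hence \([(V_0,D_0)]\in W^1\).

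The main obstacle is Step 2: establishing exactness of the determinant connection form along the attracting fiber, which requires an explicit curvature formula and a precise use of the Lagrangian and \(\CC^*\)-equivariant structure. I would attack it by computing \(A\) in a \(C^\infty\) trivialization adapted to the filtration, where \(D_t\) is block upper triangular with the \(\theta_i\) on the subdiagonal. Its trace part should then reduce to \(\sum_i\partial\log\det\) of the diagonal blocks, which depend only on the constant \(E_i\).
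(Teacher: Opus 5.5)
Your outline has the paper's architecture: determinant-of-cohomology line, holomorphic determinant connection, exactness on the filtered locus, a residue argument at the puncture, then control of the limit filtration. However, the two load-bearing steps are not delivered by the mechanisms you propose.

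\textbf{Step 2 (exactness).} On a one-dimensional base the curvature of any holomorphic connection is a $(2,0)$-form, hence identically zero. So "the Lagrangian property makes the curvature vanish on the family" gives nothing. Flatness on $\Delta^*$ is compatible with the connection form in the filtration frame being $w\,dt/t$ plus a holomorphic form, and excluding $w\neq0$ is exactly what must be proved. You need a primitive that is single-valued on $\Delta^*$. The paper gets one in two moves:
\begin{enumerate}
\item It gauges the punctured family, by upper-unipotent gauges holomorphic in $t$, into the harmonic triangular form $\bar\partial_{(e(t))}$ with $e:\Delta^*\to\mathscr E^+$ single-valued (Lemma~\ref{lem:presentation}). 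This is legitimate because $\nabla^{\mathrm{det}}$ is gauge invariant, since its moment $\frac{1}{2\pi\sqrt{-1}}\int_X\tr(\xi F_D)$ vanishes for flat $D$.
\item It computes, on the vector space $\mathscr E^+$, that the connection form in the multiplicativity frame $u(e)$ is $df(e)+\frac{1}{2\pi\sqrt{-1}}\sum_i\int_X\tr(\theta_i\wedge de_{i,i+1})$.
\end{enumerate}
The key point in (ii) is that the correction term $\int_X\tr((\partial_D-\partial_{V,h})\wedge\delta\bar\partial_V)$ pairs the strictly upper variations $de_{ij}$ only with the strictly lower blocks of $\partial_D-\partial_{V,h}$. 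Those blocks are $\theta_i+e_{i,i+1}^\dagger$ and $e_{ij}^\dagger$, so all dependence on the connection collapses to the constant $\theta_i$. The Quillen part plus $\frac{1}{2\pi\sqrt{-1}}\sum_{i<j}\int_X\tr(e_{ij}^\dagger\wedge de_{ij})$ is a closed holomorphic form on a vector space, hence exact. So the form does not reduce to diagonal $\partial\log\det$ terms as you expect. It genuinely depends on the extension data, and exactness comes from the contractibility of $\mathscr E^+$. A Lagrangian argument could only help if run on all of $W^1_{[(E,\theta)]}$, with a globally defined filtration frame there and $H^1(W^1_{[(E,\theta)]},\CC)=0$; you set up none of these.

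\textbf{Step 3 (the limit).} "Each modification lowers degree and forces $s_t$ to vanish" is not correct as stated. The $E_i$ are in general not semistable (for $s\geq2$ one has $\deg E_1>0>\deg E_s$), so modifying separate graded pieces can shift degrees with both signs and cancel in $\ord_0 s_t$. The paper closes this as follows:
\begin{enumerate}
\item It saturates, $A^p=\mathcal V_R\cap F_K^p$, which is locally free on the regular surface $X_R$, and extends transversality by torsion-freeness.
\item It embeds all graded pieces with one common power of $t$, so that $\HH=\bigoplus_iG_i\subset E_R$ is a $\theta_R$-invariant lattice.
\item The theta-characteristic twist gives $\chi(Q_j\otimes\kappa)=\deg Q_j$ independently of rank, hence $\ord_0u=-\sum_j\deg Q_j$ for $Q=E_R/\HH$. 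An arbitrary twist $\xi$ would add rank terms that break this sign argument.
\item Each layer $Q_j$ is a Higgs quotient of $(E,\theta)$, so stability of the whole Higgs bundle gives $\deg Q_j\geq0$, with equality only for $Q_j\in\{0,E\}$. Hence $\HH=t^mE_R$ (Lemma~\ref{lem:scalarlattice}).
\end{enumerate}
This homothety is what makes the limit pieces locally free and identifies them with $(E,\theta)$. Openness of stability plays no such role.

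\textbf{Reduction to a disc.} The residue argument needs the filtration frame to be meromorphic at $0$; a zero period does not exclude an essential singularity. Also, a disc in the coarse space must be lifted to an actual family of flat bundles. The paper secures both by using an algebraic curve in Simpson's framed parameter scheme and descending the generic filtration to a finite extension of the function field.
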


Section~\ref{sec:determinant-connection} uses Quillen geometry
\cite{Quillen1985,BismutGilletSoule1988III} to study the holomorphic
determinant connection \(\nabla^{\mathrm{det}}\) of the determinant-of-cohomology
line bundle for families of flat bundles.
Section~\ref{sec:rees-triangular-exactness} proves that its connection form is
exact for families admitting the same Simpson filtration. We further establish
Proposition~\ref{prop:valuation}, which is used to control one-parameter
degenerations. Section~\ref{sec:determinant-degree} proves that the limiting
Higgs sublattice is homothetic to the constant lattice. Finally,
Section~\ref{sec:proof-main} extends the filtration across one-parameter
degenerations and proves the main theorem.

\subsection*{Acknowledgments:} I would like to thank Panagiotis Dimakis for his suggestions and discussions. In our correspondence, Panagiotis shared with me a proof with a similar framework, which he independently obtained in September with the assistance of AI tools. The author takes full responsibility for the accuracy of the above statement.

\subsection*{AI Declaration:} The main idea of this paper originated from discussions between the author and ChatGPT. Precisely, ChatGPT suggested that the author use the theory of the determinant-of-cohomology line bundle to study this conjecture. In addition, ChatGPT was used for calculations and polishing the writing. All
mathematical statements have been independently checked by the author, who takes full
responsibility for the content.

\section{The determinant connection}\label{sec:determinant-connection}

This section constructs the determinant-of-cohomology line bundle for families of flat bundles and equips it with a canonical holomorphic determinant connection. We study its geometric properties, in particular deriving the curvature formula, which will be used in Section~\ref{sec:rees-triangular-exactness} to study the degeneration.

Choose a K\"ahler metric on \(X\), a theta characteristic
\(\kappa^{\otimes2}\simeq K_X\), and on \(\kappa\) the metric whose
square is the induced metric on \(K_X\). Let
\(\mathfrak{Bun}_{n,0}(X)\) be the moduli stack of rank-\(n\), degree-zero
holomorphic vector bundles on \(X\). For the universal bundle
\(\mathcal U\) and the projection
\(\pi:X\times\mathfrak{Bun}_{n,0}(X)\to\mathfrak{Bun}_{n,0}(X)\), define the
\textbf{determinant-of-cohomology line bundle}
\begin{equation}\label{eq:theta-line}
 \Theta:=\det R\pi_*(\mathcal U\otimes\kappa)
 \longrightarrow\mathfrak{Bun}_{n,0}(X).
\end{equation}
For \(V\in\mathfrak{Bun}_{n,0}(X)\), we have
\[
 \Theta_V
 =
 \det R\Gamma(X,V\otimes\kappa)
 =
 \det H^0(X,V\otimes\kappa)
 \otimes
 \det H^1(X,V\otimes\kappa)^{-1}.
\]
Although the dimensions of these cohomology groups may jump with \(V\),
the derived pushforward
\[
 R\pi_*(\mathcal U\otimes\kappa)
\]
is a perfect complex, and hence its determinant $\Theta$ is globally defined as a line bundle by the
Knudsen--Mumford determinant functor
\cite{KnudsenMumford1976}.
Our convention is the dual of that in Bismut--Gillet--Soul\'e:
they define \(\lambda^{\mathrm{KM}}(F):=\det(R\pi_*F)^{-1}\)
\cite[p.~341]{BismutGilletSoule1988III}. Consequently their analytic determinant line, identified with \(\lambda^{\mathrm{KM}}\) by
\cite[Theorem~3.14]{BismutGilletSoule1988III}, is canonically
\(\Theta^\vee\). The Quillen metric and connection on \(\Theta\)
used below are the duals of theirs. 

Let \(B\) be a complex manifold, and let \((\mathcal V,\mathcal D)\) be a family of flat bundles on \(X\) over \(B\).
The underlying bundles determine the classifying map
\[
 q:B\longrightarrow\mathfrak{Bun}_{n,0}(X).
\]
The \textbf{determinant-of-cohomology line bundle} of the family is the pullback
\[
 \lambda_B:=q^*\Theta
 =\det R(\pi_B)_*(\mathcal V\otimes\kappa),
 \qquad \pi_B:=\operatorname{pr}_B:X\times B\to B.\]
For a single bundle or a family over a base, we use the notation
\(\lambda(F):=\det R\Gamma(X,F\otimes\kappa)\) and
\(\lambda_B(\mathcal F):=\det R(\pi_B)_*(\mathcal F\otimes\kappa)\),
respectively.

All analytic parameter spaces needed below are complex manifolds.
Locally on \(B\), trivialize the underlying smooth bundle along \(X\),
holomorphically in the parameter, and choose a Hermitian metric \(h\)
on this fixed smooth bundle, independent of the parameter in \(B\).
For each \(b\in B\), write the corresponding fiber as \((V,D)\),
suppressing \(b\) from the notation, and set
\[
 D_h=\bar\partial_V+\partial_{V,h},\qquad
 D=\bar\partial_V+\partial_D.
\]
and let \(\delta=d_B=\partial_B+\bar\partial_B\) denote exterior differentiation on \(B\). We use the conventions
\[
 \operatorname{ch}(W,\nabla)
 :=\operatorname{tr}\exp\!\left(-\frac{F_\nabla}{2\pi\sqrt{-1}}\right),
 \qquad
 c_1(W,\nabla):=-\frac1{2\pi\sqrt{-1}}\operatorname{tr}F_\nabla.
\]

In expressions involving variations, the parameter directions are
antisymmetrized while the forms on \(X\) are wedged in the displayed
order. For example,
\[
 \begin{split}
 \operatorname{tr}(\delta P\wedge\delta A)(v,w)
 :=\operatorname{tr}\bigl(
 (\delta_vP)\wedge(\delta_wA)
 -(\delta_wP)\wedge(\delta_vA)\bigr).
 \end{split}
\]

The Quillen metric and its Chern connection are constructed in
\cite[Definition~1.5 and Theorem~1.6]{BismutGilletSoule1988III}; we
apply that construction with the dual convention just specified.
Let \(\nabla^{\mathrm Q,h}\) denote the resulting \textbf{Quillen Chern
connection} on \(\lambda_B\), and define the \textbf{determinant connection}
by
\begin{equation}\label{eq:correction}
 \nabla^{\mathrm{det}}_h
 :=\nabla^{\mathrm Q,h}
 +\frac{1}{2\pi\sqrt{-1}}\int_X
   \tr\bigl((\partial_D-\partial_{V,h})
                  \wedge\delta\bar\partial_V\bigr).
\end{equation}
The integral is a scalar one-form on \(B\).

\begin{lemma}[Determinant connection]\label{lem:detconnection}
The connection \(\nabla_h^{\mathrm{det}}\) has the following properties.
\begin{enumerate}
\item In a fixed smooth presentation, \(\nabla_h^{\mathrm{det}}\) is independent
of the chosen parameter-independent Hermitian metric \(h\).

\item The resulting connection is holomorphic, and its curvature is
\begin{equation}\label{eq:correctedcurv}
 F_{\nabla^{\mathrm{det}}}
 =\frac{1}{2\pi\sqrt{-1}}\int_X
   \tr\bigl(\delta\partial_D\wedge\delta\bar\partial_V\bigr).
\end{equation}

\item With the gauge and Lie-derivative conventions used below, its complexified equivariant moment on the ambient space of pairs
\((\bar\partial_V,\partial_D)\) is
\begin{equation}\label{eq:correctedmoment}
 \mu_{\mathrm{det}}(\xi)
 =\frac{1}{2\pi\sqrt{-1}}\int_X\tr(\xi F_D),
 \qquad \xi\in\Omega^0(X,\End V).
\end{equation}
In particular, this moment vanishes for a holomorphic connection on the
curve \(X\).

\item For holomorphic families of connections on \(X\), the resulting
connection is invariant under holomorphic complex gauge transformations
and is functorial under holomorphic base change. Hence the local
constructions glue to a natural holomorphic connection
\(\nabla^{\mathrm{det}}\) on \(\lambda_B\).
\end{enumerate}
\end{lemma}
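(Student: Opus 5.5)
The plan is to reduce everything to the Bismut--Gillet--Soul\'e curvature theorem for the Quillen metric, applied with the dual convention. First I record what that theorem gives on $\lambda_B$. The Quillen Chern connection $\nabla^{\mathrm Q,h}$ is compatible with the holomorphic structure of $\lambda_B$. Its curvature is the degree-two component of $\int_X \operatorname{Td}(X)\operatorname{ch}(\mathcal V\otimes\kappa,\nabla^h)$ on $X\times B$, with the sign fixed by the duality. Here $\nabla^h=\bar\partial_V+\partial_{V,h}+\delta$ is the Chern connection of the parameter-independent metric $h$ on the fixed smooth bundle. The choice of $\kappa$ with its square-root metric kills the $\operatorname{Td}$ correction: $\operatorname{Td}(X)e^{c_1(\kappa)}=\widehat A(X)$, and on a curve this is $1$ in degree two. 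Hence only the degree-$(2,2)$ term of $\operatorname{ch}(\mathcal V,\nabla^h)$ survives fiber integration. Expanding $F_{\nabla^h}$ over $X\times B$, its mixed components are $\delta\bar\partial_V$ and $\delta\partial_{V,h}$. The parameter-direction part is zero because the smooth trivialization and $h$ are constant. This gives
\[
F_{\nabla^{\mathrm Q,h}}=\frac{1}{2\pi\sqrt{-1}}\int_X\tr\bigl(\delta\partial_{V,h}\wedge\delta\bar\partial_V\bigr),
\]
up to a sign that I will fix against the rank-one case.

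\textbf{Parts (i) and (ii).} Set $\alpha_h:=\frac{1}{2\pi\sqrt{-1}}\int_X\tr\bigl((\partial_D-\partial_{V,h})\wedge\delta\bar\partial_V\bigr)$. For holomorphy, I note that $\delta\bar\partial_V$ is of type $(1,0)$ on $B$, because the family is holomorphic in the parameter. So $\alpha_h$ is a $(1,0)$-form and does not change the $(0,1)$-part of the connection; holomorphicity of $\nabla^{\mathrm Q,h}$ passes to $\nabla^{\det}_h$. For the curvature, I compute $d_B\alpha_h$ by the Leibniz rule with the antisymmetrization convention. The term $\delta\delta\bar\partial_V$ vanishes, and the $-\delta\partial_{V,h}\wedge\delta\bar\partial_V$ term cancels the Quillen curvature. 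This leaves \eqref{eq:correctedcurv}.

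For (i), I vary $h$ by $h_t=h e^{t\phi}$. The Bismut--Gillet--Soul\'e anomaly formula gives the Quillen metric change as $\exp$ of the fiber integral of the Bott--Chern secondary class. Its $(1,0)$ derivative shifts the connection by $\frac{1}{2\pi\sqrt{-1}}\int_X\tr(\partial_{V,h}\phi\wedge\delta\bar\partial_V)$, up to sign. The change in $\alpha_h$ is $-\frac{1}{2\pi\sqrt{-1}}\int_X\tr(\delta_t\partial_{V,h}\wedge\delta\bar\partial_V)$ with $\delta_t\partial_{V,h}=\partial_{V,h}\phi$ (Chern connection variation). The two shifts cancel, so I only need to verify the first-order variation and then integrate in $t$.

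\textbf{Part (iii).} Take the infinitesimal gauge action $\delta_\xi\bar\partial_V=-\bar\partial_V\xi$ and $\delta_\xi\partial_D=-\partial_D\xi$ (i.e.\ $[\cdot,\xi]$ terms). The moment is defined by $\iota_\xi F=-d\mu(\xi)$, and I contract \eqref{eq:correctedcurv} with the vector field $\xi$. Integrating by parts on $X$ uses $\partial_D\bar\partial_V+\bar\partial_V\partial_D=F_D$ acting on $\xi$. This yields $d_B$ of $\frac{1}{2\pi\sqrt{-1}}\int_X\tr(\xi F_D)$. For a holomorphic connection on a curve, $F_D=0$, so the moment vanishes.

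\textbf{Part (iv).} Infinitesimal complex gauge invariance follows from (iii) together with the vanishing of the moment, since the Lie derivative equals $\iota_\xi F+d\iota_\xi$, applied to the connection form relative to the lifted action. Exponentiating gives invariance under holomorphic complex gauge transformations. The gauge here is holomorphic in $B$, so it can be absorbed by changing the smooth trivialization. Base change functoriality is immediate, because every ingredient pulls back: BGS Quillen metrics are natural, and $\alpha_h$ is given by a pointwise formula. With (i), this shows the local constructions agree on overlaps and glue.

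\textbf{Main obstacle.} The hardest step will be keeping the sign and normalization conventions coherent under the dual convention. This matters in three places: the Quillen curvature, the anomaly formula in (i), and the contraction in (iii). I would first fix every sign in the rank-one case with $B$ a family of line bundles, where everything is explicit, and then transport that choice to general rank.
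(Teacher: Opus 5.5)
\emph{Comparison with the paper.} Your treatment of (i) and (ii) follows the paper's route: the BGS curvature theorem with the $\kappa$-normalization killing the Todd term, the infinitesimal anomaly formula plus integration by parts for metric independence, and $d_B$ of the correction cancelling the $\delta\partial_{V,h}$ term. One small correction: $\nabla^{\mathrm Q,h}$ is only compatible with the holomorphic structure, not a holomorphic connection. So holomorphicity of $\nabla^{\mathrm{det}}$ comes from the corrected curvature being of type $(2,0)$; it is not inherited from the Quillen connection.

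\emph{The gap.} There is a genuine gap in (iii), and it propagates to (iv).

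The moment in the statement is $\mu_{\mathrm{det}}(\xi)=\mathcal L_\xi-\nabla^{\mathrm{det}}_{\xi^\#}$, defined through the determinant lift $\Phi_g$. Your contraction of the curvature with $\xi^\#$, followed by integration by parts, is correct, but it only yields $d\mu_{\mathrm{det}}(\xi)$.

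Even the identity $\iota_{\xi^\#}F=\pm d\mu(\xi)$ needs a prior fact you do not establish: that $\nabla^{\mathrm{det}}$ is invariant under parameter-independent complex gauge transformations. This follows, for instance, from (i) and naturality, $\Phi_g^*\nabla^{\mathrm{det}}_h=\nabla^{\mathrm{det}}_{g^*h}$. The paper instead uses unitary invariance together with holomorphy in $z$ for $e^{zs}$.

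More seriously, $d\mu$ determines $\mu(\xi)$ only up to an additive gauge-invariant linear functional, such as $\xi\mapsto\int_X\tr(\xi)\,\omega$. Such a shift is exactly what changing the lift by a character of the gauge group produces, and the curvature cannot detect it. By ``defining'' the moment as a primitive, you silently set this constant to zero. So the claim that the moment vanishes at holomorphic connections is not proved.

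That constant is precisely what (iv) uses. For a $B$-dependent gauge $g$,
$\Phi_g^*\nabla^{\mathrm{det}}_{g\cdot D}-\nabla^{\mathrm{det}}_D=-\mu_{\mathrm{det}}(g^{-1}\delta g)$
with the lifted moment. A nonzero constant would leave a nonzero one-form on $B$, and the local constructions would fail to glue.

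Your proposed rank-one sign check does not settle this. For $\xi=f\cdot\mathrm{id}$ with $f$ non-constant, $\xi^\#$ vanishes nowhere on the space of pairs. So $\mu(\xi)$ cannot be read off at a fixed point and must be computed from the lift itself.

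The paper supplies this missing input by an independent computation of the lifted moment. The equivariant local index theorem (Freed, together with BGS Theorem~1.15) gives
$\mu_{\mathrm Q,h}(\xi)=\frac{1}{2\pi\sqrt{-1}}\int_X\tr(\xi F_{D_h})$ for $\xi\in\Omega^0(X,\mathfrak u(V,h))$. Adding the correction evaluated on $\xi^\#$ and integrating by parts yields $\frac{1}{2\pi\sqrt{-1}}\int_X\tr(\xi F_D)$. Holomorphy and complex-gauge equivariance then extend this complex-linearly to all of $\Omega^0(X,\End V)$.
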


\begin{proof}
The calculations below are first made for arbitrary pairs
\((\bar\partial_V,\partial_D)\) on the fixed smooth bundle and then
restricted to holomorphic connections, for which \(F_D=0\).

\begin{enumerate}
\item \emph{Independence of the auxiliary metric.}
 Let \(h_u\), \(0\leq u\leq1\), be a path from \(h_0\)
to \(h_1\), independent of \(B\), and put
\(k_u:=h_u^{-1}\partial_u h_u\). For a local holomorphic determinant frame \(\sigma\), let
\(\alpha_{\mathrm Q,h_u}\) denote the Quillen connection form in
the frame \(\sigma\), namely
\[
\alpha_{\mathrm Q,h_u}
=
\partial_B\log\|\sigma\|_{\mathrm Q,h_u}^{\,2}.
\] With our convention, the infinitesimal Bott--Chern anomaly formula
gives the variation of the Quillen metric:
\begin{equation}\label{eq:infinitesimal-anomaly}
 \partial_u\log\|\sigma\|_{\mathrm Q,h_u}^{\,2}
 =
 -\frac1{2\pi\sqrt{-1}}\int_X
 \tr(k_uF_{D_{h_u}});
\end{equation}
see
\cite[Theorem~1.23, Eq.~(1.146), and Remark~1.24]{BismutGilletSoule1988III}. Here the sign incorporates the
dualization of the Bismut--Gillet--Soul\'e determinant convention. Applying \(\partial_B\) to \eqref{eq:infinitesimal-anomaly} yields the corresponding
variation of the Quillen connection form:
\[
 \partial_u\alpha_{\mathrm Q,h_u}
 =
 -\frac1{2\pi\sqrt{-1}}\partial_B
 \int_X\tr(k_uF_{D_{h_u}}).
\]

Since \(\delta k_u=0\), while \(\bar\partial_V\) varies holomorphically
with \(B\),
\[
 \partial_u\partial_{V,h_u}=\partial_{V,h_u}k_u,
 \qquad
 \delta_vF_{D_{h_u}}
 =\partial_{V,h_u}(\delta_v\bar\partial_V)
 \quad(v\in T^{1,0}B).
\]
Integrating by parts on \(X\) gives
\[
 \begin{aligned}
 \partial_u\alpha_{\mathrm Q,h_u}
 &=-\frac1{2\pi\sqrt{-1}}\int_X
   \tr\bigl(k_u\partial_{V,h_u}(\delta\bar\partial_V)\bigr)\\
 &=\frac1{2\pi\sqrt{-1}}\int_X
   \tr\bigl((\partial_u\partial_{V,h_u})\wedge
              \delta\bar\partial_V\bigr).
 \end{aligned}
\]
Integrating in \(u\) yields
\begin{equation*}
 \nabla^{\mathrm Q,h_1}-\nabla^{\mathrm Q,h_0}
 =\frac{1}{2\pi\sqrt{-1}}\int_X
   \tr\bigl((\partial_{V,h_1}-\partial_{V,h_0})
                 \wedge\delta\bar\partial_V\bigr).
\end{equation*}
The correction in \eqref{eq:correction} changes by the negative of this
quantity, proving (i). 

\item \emph{Curvature and holomorphicity.}
The compatible metrics on \(X\) and \(\kappa\) give
\begin{align}\label{eq:ch}
 c_1(\kappa)+\tfrac12c_1(T_X)=0,
\end{align}
so the $(1,1)$ part of
\(\operatorname{ch}(\kappa)\operatorname{td}(T_X)\) vanishes in the
formulas below. Let \(\mathbb F_h\) be the curvature of the total Chern connection
\(D_h+\delta\) on \(X\times B\). With the chosen parameter-independent
smooth trivialization and metric, its relevant components are
\[
 \mathbb F_h=F_{D_h}+\delta\bar\partial_V+\delta\partial_{V,h},
\]
where the last two terms have fiber degree one and base degree one. By
\cite[Theorem~1.27]{BismutGilletSoule1988III}, after dualizing its
determinant convention,
\begin{equation}\label{eq:total-index-curvature}
 \begin{aligned}
 F_{\nabla^{\mathrm Q,h}}
 &=-2\pi\sqrt{-1}\left[\int_X\operatorname{td}(T_X)
                  \operatorname{ch}(\kappa)
                  \tr\exp\!\left(-\frac{\mathbb F_h}{2\pi\sqrt{-1}}\right)
          \right]_{[2]}\\
 &\overset{\eqref{eq:ch}}{=}-\frac1{4\pi\sqrt{-1}}
   \left[\int_X\tr(\mathbb F_h^{\,2})\right]_{[2]},
 \end{aligned}
\end{equation}
where \([2]\) denotes the component of degree \(2\) in the
differential forms on \(B\) obtained after integrating over the
fiber \(X\).

With the chosen parameter-independent smooth trivialization and metric
\(h\), the total connection \(D_h+\delta\) has no base--base curvature
component. Expanding the mixed terms and moving parameter one-forms past fiber
one-forms gives
\[
 \left[\int_X\tr(\mathbb F_h^{\,2})\right]_{[2]}(v,w)
 =-2\int_X\tr\bigl(
  (\delta_v\partial_{V,h})\wedge(\delta_w\bar\partial_V)
 -(\delta_w\partial_{V,h})\wedge(\delta_v\bar\partial_V)\bigr).
\]
Hence \eqref{eq:total-index-curvature} gives
\begin{equation}\label{eq:quillencurv}
 F_{\nabla^{\mathrm Q,h}}
 =\frac1{2\pi\sqrt{-1}}\int_X\tr(\delta\partial_{V,h}\wedge\delta\bar\partial_V).
\end{equation}
Taking the exterior derivative of \eqref{eq:correction} now gives
\eqref{eq:correctedcurv}. For a holomorphic family of pairs,
\(\delta\bar\partial_V\) and \(\delta\partial_D\) have type \((1,0)\)
on \(B\); the correction is therefore of type \((1,0)\) and the
curvature of type \((2,0)\). Thus \(\nabla^{\mathrm{det}}\) is a
holomorphic connection, proving (ii).

\item \emph{The equivariant moment.}
We first record equivariance for parameter-independent complex gauges.
Let \(\mathcal G(V):=\Gamma(X,\operatorname{GL}(V))\) denote the complex gauge group of the fixed
smooth bundle \(V\), and set
\[
 \mathfrak u(V,h):=\{\xi\in\End V:\xi^{*_{h}}=-\xi\},
\]
so that \(\Omega^0(X,\mathfrak u(V,h))\) is the Lie algebra of the
unitary gauge subgroup. Use the left action \(g\cdot D:=gDg^{-1}\), with determinant lift
\(\Phi_g:\lambda_D\to\lambda_{g\cdot D}\). A unitary gauge preserves
the Quillen connection and the correction. If \(s\) is
\(h\)-self-adjoint and \(g_z:=e^{zs}\), the coefficients in the
\(B\)-directions of
\(\Phi_{g_z}^*\nabla^{\mathrm{det}}-\nabla^{\mathrm{det}}\) are
holomorphic in \(z\) and vanish for \(z\in i\mathbb R\); hence they
vanish for all \(z\). Polar decomposition then gives equivariance for
every parameter-independent complex gauge.

The infinitesimal generator of the gauge action is
\[
\xi^\# D
=
\left.\frac{d}{dt}\right|_{t=0}
e^{t\xi}De^{-t\xi}
=
-D^{\mathrm{End}}\xi ,
\]
where \(D^{\mathrm{End}}\xi=[D,\xi]\) denotes the connection induced by \(D\) on
\(\operatorname{End}(V)\). On determinant-line sections set
\[
 (\mathcal L_\xi\sigma)(D)
 :=\left.\frac{d}{dt}\right|_{t=0}
       \Phi_{e^{t\xi}}^{-1}\sigma(e^{t\xi}\cdot D),
 \qquad
 \mu(\xi):=\mathcal L_\xi-\nabla_{\xi^\#}.
\]

For \(\xi\in\Omega^0(X,\mathfrak u(V,h))\), the equivariant
extension of the universal curvature is obtained by replacing
\(\mathbb F_h\) with $\mathbb F_h-\xi.$ Applying the equivariant local index theorem
\cite[Theorem~14 and Remark~19]{Freed2017Equivariant} to the infinitesimal
gauge action generated by \(\xi\), together with the determinant connection formula
\cite[Theorem~1.15]{BismutGilletSoule1988III}, we obtain
\begin{equation}\label{eq:equivariant-index-normalization}
 F_{\nabla^{\mathrm Q,h}}+\mu_{\mathrm Q,h}(\xi)
 =-2\pi\sqrt{-1}\left[\int_X\operatorname{td}(T_X)
                  \operatorname{ch}(\kappa)
                  \tr\exp\!\left(-\frac{\mathbb F_h-\xi}{2\pi\sqrt{-1}}\right)
          \right]_{\mathrm{eq},[2]} .
\end{equation}
Here \([\,\cdot\,]_{\mathrm{eq},[2]}\) denotes total equivariant degree
\(2\), with the Cartan variable \(\xi\) assigned degree \(2\). Combining \eqref{eq:equivariant-index-normalization} and \eqref{eq:total-index-curvature} yields
\begin{equation*}
 \mu_{\mathrm Q,h}(\xi)
 =\frac1{2\pi\sqrt{-1}}\int_X\tr(\xi F_{D_h}),
 \qquad \xi\in\Omega^0(X,\mathfrak u(V,h)).
\end{equation*}

Since \(\xi^\#\bar\partial_V=-\bar\partial_V\xi\), evaluating the
correction on \(\xi^\#\) and integrating by parts gives
\[
 \begin{aligned}
 \mu_{\mathrm{det}}(\xi)
 &=\mu_{\mathrm Q,h}(\xi)
   +\frac1{2\pi\sqrt{-1}}\int_X\tr\!\left(
     \xi\,\bar\partial_V(\partial_D-\partial_{V,h})\right)\\
 &=\frac1{2\pi\sqrt{-1}}\int_X\tr(\xi F_D),
 \end{aligned}
\]
because
\(F_D=F_{D_h}+\bar\partial_V(\partial_D-\partial_{V,h})\) on a curve.
Holomorphicity and the fixed complex-gauge equivariance just proved extend
this identity complex-linearly from \(\mathfrak u(V,h)\) to
\(\Omega^0(X,\End V)\), proving (iii).

\item \emph{Gauge invariance and base change.}
Let \(g:B\to\mathcal G(V)\) be smooth along \(X\) and holomorphic
in the parameter. The variation of \(g\) contributes the vertical
infinitesimal action, hence
\begin{equation*}
 \Phi_g^*\nabla^{\mathrm{det}}_{g\cdot D}
       -\nabla^{\mathrm{det}}_D
 =-\mu_{\mathrm{det}}(g^{-1}\delta g);
\end{equation*}
cf. \cite[Proposition~7.6]{BGV1992}. For a holomorphic family of
connections on \(X\), the right-hand side vanishes by (iii), proving
gauge invariance.

After shrinking \(B\), smooth trivializations holomorphic in the
parameter exist by combining local holomorphic frames with a
parameter-independent partition of unity on \(X\). Any two such
trivializations differ by a gauge transformation \(g:B\to\mathcal G(V)\)
that is smooth along \(X\) and holomorphic in the parameter, so the local
constructions glue. Finally, both the Quillen Chern
connection and the fiber integral in \eqref{eq:correction} commute with
holomorphic base change. This proves (iv).\qedhere
\end{enumerate}
\end{proof}

\begin{remark}\label{rem:tautological}
For a line bundle \(L\) on a complex manifold \(M\), the pullback of
\(L\) to the space of holomorphic connections
\(p:\operatorname{Conn}(L)\to M\) carries a tautological holomorphic
connection whose curvature defines a natural holomorphic symplectic form
on \(\operatorname{Conn}(L)\)
\cite[Proposition~3.3]{BiswasHurtubiseStasheff2012}. In particular, the
difference between two holomorphic connections is a holomorphic one-form.
From this viewpoint, \eqref{eq:correction} amounts to taking the Quillen
connection as a reference connection and adding the corresponding
holomorphic one-form on \(B\).

The result of \cite[Theorem~2.1]{BiswasHurtubise2021} has a related
interpretation on the stable locus. Since their theta line bundle and
normalization conventions differ from ours, we use it only as a conceptual
comparison.
\end{remark}

\section{Simpson filtrations and the extendability of the det frame}\label{sec:rees-triangular-exactness}

In this section, we first recall Simpson filtrations and then use Quillen geometry to translate the behavior of limit points of \(W_{[(E,\theta)]}^1\) into the nondegeneracy of the associated determinant frame. In Section~\ref{sec:determinant-degree}, this nondegeneracy will be converted into a rigidity property of the limiting Higgs structure.

\subsection{The Simpson filtration}\label{subsec:stable-rees}

Simpson characterizes the points of \(W_{[(E,\theta)]}^1\) as flat bundles carrying a Simpson filtration with associated graded Higgs bundle \((E,\theta)\) as below.

\begin{lemma}\label{lem:rees}
Membership in \(W_{[(E,\theta)]}^1\) is equivalent to the existence
of a decreasing filtration by holomorphic subbundles
\begin{equation}\label{eq:filtration}
 V=F^0\supset F^1\supset\cdots\supset F^{s-1}\supset F^s=0,
 \qquad
 D F^p\subset F^{p-1}\otimes K_X
 \quad(1\leq p\leq s-1),
\end{equation}
together with graded identifications
\[
 \phi_i:\Gr_F^{\,s-i}V
 =F^{s-i}/F^{s-i+1}\xrightarrow{\sim}E_i,
 \qquad 1\leq i\leq s,
\]
under which the induced graded Higgs field is \(\theta\).
\end{lemma}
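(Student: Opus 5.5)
This lemma is essentially a restatement of Simpson's characterization \cite[Theorem~2.5 and Proposition~4.3]{Simpson2010}. I would prove it by passing through the Rees construction, which converts a filtration into a \(\lambda\)-connection over \(\mathbb A^1\), and then using separatedness of the limit in \(\Mhod\) together with the stability of \((E,\theta)\).

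\emph{From a filtration to a limit.} Suppose \((V,D)\) carries a filtration \eqref{eq:filtration} with the stated graded identifications. I would form the Rees bundle
\[
 \xi(V,F):=\sum_p z^{-p}F^p\subset V\otimes\OO_{\mathbb A^1}[z,z^{-1}]
\]
over \(X\times\mathbb A^1\), and equip it with the operator \(zD\). Griffiths transversality \(DF^p\subset F^{p-1}\otimes K_X\) is exactly what makes \(zD\) preserve \(\xi(V,F)\) as a \(z\)-connection. Over \(z\neq0\) this family is isomorphic to \((z,V,zD)\), via the isomorphism \(\xi(V,F)|_z\simeq V\). Its fiber at \(z=0\) is \(\bigoplus_p \Gr_F^pV\), and the residual Higgs field is the graded map induced by \(D\), namely \(\Gr^p\to\Gr^{p-1}\otimes K_X\). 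Under the identifications \(\phi_i\) this becomes \((E,\theta)\). Since \((E,\theta)\) is stable, the family is a morphism \(\mathbb A^1\to\Mhod\) extending \(z\mapsto[(z,V,zD)]\). Because \(\Mhod\to\mathbb A^1\) is separated, this gives \(\lim_{z\to0}[(z,V,zD)]=[(E,\theta)]\).

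\emph{From a limit to a filtration.} Conversely, suppose the limit exists and equals \([(E,\theta)]\). Since \((E,\theta)\) is stable, I would use Langton-type properness for \(\Mhod\) to obtain a family of semistable \(\lambda\)-connections \((\mathcal V,\nabla)\) over \(X\times\mathbb A^1\). This family is \(\mathbb C^*\)-equivariant after replacing it by its orbit closure: one uses the \(\mathbb C^*\)-action and the fact that the limit point is a fixed point with finite automorphisms modulo scalars. It restricts to \((V,zD)\) for \(z\neq0\) and to \((E,\theta)\) at \(0\). A \(\mathbb C^*\)-equivariant bundle on \(X\times\mathbb A^1\) is the Rees bundle of a filtration \(F^\bullet\) of its generic fiber \(V\), namely \(F^p=\{v: z^{-p}v\in\mathcal V\}\). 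The condition that \(\nabla\) is a \(z\)-connection preserving \(\mathcal V\) translates into Griffiths transversality. The central fiber is \(\Gr_FV\) with the induced graded Higgs field, and the isomorphism with \((E,\theta)\) can be taken \(\mathbb C^*\)-equivariant. This yields the graded identifications \(\phi_i\), after reindexing so that the grading matches \eqref{eq:hodge}. Saturation, i.e.\ that the \(F^p\) are subbundles, follows because \(\mathcal V\) is locally free and flat over \(\mathbb A^1\).

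The main obstacle is the converse direction. Specifically, one must produce an honest equivariant family realizing the limit, rather than merely a limit of points in the coarse moduli space. Stability of \((E,\theta)\) is essential here: it makes the limit point a stable point, so the local structure of the moduli space there is that of a quotient by a free \(\mathbb{P}\mathrm{GL}\)-action, which allows lifting. For this step I would cite \cite[Theorem~2.5, Proposition~4.3]{Simpson2010} rather than reprove it.
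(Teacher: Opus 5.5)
Your proposal is essentially the paper's proof: both reduce the lemma to Simpson's Griffiths-transverse/Rees correspondence, and your Rees-bundle argument for the forward direction is correct (transversality makes $zD$ preserve $\xi(V,F)$, the central fiber $\Gr_F V\cong(E,\theta)$ is stable, and $\Mhod$ is separated). For the converse, which you rightly cite, the equivariance does not come from an ``orbit closure'' or from lifting through a free $\mathrm{PGL}$-action: Simpson first produces a gr-semistable Griffiths-transverse filtration of $(V,D)$ (his Theorem~2.5), whose Rees family is $\CC^*$-equivariant by construction, so your forward argument identifies the limit with the class of $\Gr_F V$; stability of the limit then forces $\Gr_F V\cong(E,\theta)$, and simplicity of $(E,\theta)$ makes the grading unique up to a common shift of indices.
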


\begin{proof}
This is the stable case of Simpson's Griffiths-transverse/Rees
correspondence. Simpson proves the existence of a gr-semistable
Griffiths-transverse filtration, identifies its associated graded with
the \(\mathbb G_m\)-limit via the Rees construction, and proves
uniqueness up to a common shift of the indices when the limit is stable;
see \cite[Theorem~2.5, Lemma~4.1, Corollary~4.2,
Proposition~4.3]{Simpson2010}. 
\end{proof}

\subsection{The extension-class space}\label{subsec:triangular-presentation}

We introduce the \textbf{finite-dimensional extension-class space} with fixed associated graded \(\bigoplus_i E_i\).

Fix Hermitian metrics on all \(E_i\) and on their orthogonal smooth sum.
Set
\[
 \mathscr E^+:=\bigoplus_{i<j}H^1(X,\Hom(E_j,E_i)),
\]
and represent these cohomology groups by harmonic \((0,1)\)-forms.
Thus \(\mathscr E^+\) is the finite-dimensional complex vector space of
strictly upper-triangular extension parameters. For
\(e=(e_{ij})\in\mathscr E^+\), define
\begin{equation*}
 \bar\partial_{(e)}:=\operatorname{diag}(\bar\partial_{E_1},\ldots,
                    \bar\partial_{E_s})+(e_{ij})_{i<j}.
\end{equation*}
Hence
\(\bar\partial_{(e)}^2=0\) because \(\dim_{\CC}X=1\). Each \(e\in\mathscr E^+\)
defines a holomorphic bundle
\[
E(e):=(E,\bar\partial_{(e)})
\]
equipped with the canonical decreasing filtration
\[
F_{(e)}^p:=\bigoplus_{i=1}^{s-p}E_i,
\qquad 0\leq p\leq s,
\]
for which $\Gr_{F_{(e)}}^{\,s-i}E(e)\simeq E_i.$

\begin{definition}[Constant extensions]\label{def:constant-extensions}
Let \(R\) be a DVR over \(\CC\), and write
\[
 X_R:=\operatorname{Spec}R\times X.
\]
For a vector bundle \(W\) on \(X\), its \textbf{constant extension} to \(X_R\)
is
\[
 W_R:=\operatorname{pr}_X^*W.
\]
We use the same convention for bundle morphisms; in particular,
\[
 (E_R,\theta_R)
 :=
 \left(\bigoplus_{i=1}^s(E_i)_R,\,
       \bigoplus_{i=1}^{s-1}(\theta_i)_R\right).
\]
If \(\Delta^*\) is a punctured disk, write
\[
 X_{\Delta^*}:=\Delta^*\times X,\qquad
 W_{\Delta^*}:=\operatorname{pr}_X^*W,
\]
and similarly denote by \((E_{\Delta^*},\theta_{\Delta^*})\) the
constant extension of \((E,\theta)\) to \(X_{\Delta^*}\).
\end{definition}

\begin{lemma}[Triangular harmonic presentation]\label{lem:presentation}
Let \((\mathcal V,\mathcal D)\) be a holomorphic family of connections on \(X\)
parametrized by a punctured disk \(\Delta^*\), representing points of
\(W_{[(E,\theta)]}^1\). Assume that the Simpson filtrations of
Lemma~\ref{lem:rees} have been chosen holomorphically over \(\Delta^*\):
\[
 \mathcal V=\mathcal F^0\supset\mathcal F^1\supset\cdots
 \supset\mathcal F^{s-1}\supset\mathcal F^s=0,
 \qquad
 \mathcal D\mathcal F^p\subset\mathcal F^{p-1}\otimes K_X
 \quad(1\leq p\leq s-1),
\]
together with holomorphic graded identifications
\[
 \phi_i:\Gr_{\mathcal F}^{\,s-i}\mathcal V
 \xrightarrow{\sim}(E_i)_{\Delta^*},
 \qquad 1\leq i\leq s,
\]
intertwining the induced graded Higgs maps with \(\theta_i\).
Then there exist a single-valued holomorphic map
\[
 e:\Delta^*\longrightarrow\mathscr E^+,\qquad t\longmapsto e(t),
\]
and a smooth isomorphism
\[
 \Psi:\mathcal V\xrightarrow{\sim}E_{\Delta^*},
\]
holomorphic in the parameter, such that for every \(t\in\Delta^*\)
\[
 \Psi\bigl(\mathcal F^p_t\bigr)=F_{(e(t))}^p,
 \qquad
 \operatorname{gr}^{s-i}(\Psi)=\phi_i,
\]
and the Dolbeault operator on \(E\) is \(\bar\partial_{(e(t))}\).
In this presentation,
\begin{equation}\label{eq:lowerblocks}
 (\partial_{\mathcal D})_{i+1,i}=\theta_i,\qquad
 (\partial_{\mathcal D})_{ji}=0\quad(j>i+1).
\end{equation}
\end{lemma}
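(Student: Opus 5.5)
Over the punctured disk \(\Delta^*\) we will build \(\Psi\) by splitting the filtration smoothly, fixing the graded pieces through the \(\phi_i\), and then using a triangular gauge to push the off-diagonal part of the Dolbeault operator into harmonic representatives. The lower-block identities \eqref{eq:lowerblocks} should then follow from Griffiths transversality.

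\emph{Step 1: a smooth splitting.} The quotients \(\mathcal F^{p}/\mathcal F^{p+1}\) are holomorphic bundles on \(X_{\Delta^*}\), and \(\Delta^*\) is Stein. Hence each short exact sequence
\[
 0\to\mathcal F^{p+1}\to\mathcal F^p\to\Gr^p_{\mathcal F}\mathcal V\to0
\]
admits a \(C^\infty\) splitting that is holomorphic in \(t\). To see this, split locally in \(t\) with a partition of unity on \(X\) only, as in the proof of Lemma~\ref{lem:detconnection}(iv). The local splittings differ by sections of \(\Hom(\Gr^p,\mathcal F^{p+1})\) that are smooth on \(X\) and holomorphic in \(t\). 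This is a Cousin problem on the Stein curve \(\Delta^*\) with values in a Fréchet space, so it is solvable. Composing the splittings with \(\phi_i\) gives a smooth isomorphism \(\Psi_0:\mathcal V\to E_{\Delta^*}\), holomorphic in \(t\). It carries \(\mathcal F^p\) onto \(\bigoplus_{i\le s-p}E_i\) and has graded pieces \(\phi_i\).

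Under \(\Psi_0\) the Dolbeault operator becomes block upper triangular. Its diagonal blocks are \(\bar\partial_{E_i}\), since the \(\phi_i\) are holomorphic and \(E_i\) is constant in \(t\). Its off-diagonal blocks are \(\beta_{ij}(t)\in\Omega^{0,1}(\Hom(E_j,E_i))\) with \(i<j\), holomorphic in \(t\).

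\emph{Step 2: triangular gauge to harmonic form.} This is the main obstacle. We seek \(g=1+u\), with \(u\) strictly upper triangular and holomorphic in \(t\), such that
\[
 g\,\bar\partial g^{-1}=\bar\partial_{(e(t))}.
\]
We solve it inductively on the distance \(j-i\) from the diagonal. At step \(k\), the block \((i,i+k)\) of the conjugated operator equals the current \(\beta_{i,i+k}\) plus a correction \(-\bar\partial u_{i,i+k}\), modulo products of blocks of smaller distance that are already fixed.

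Use the Hodge decomposition for \(\bar\partial\) on \(\Hom(E_j,E_i)\) with the fixed metrics. Set \(e_{ij}\) to be the harmonic projection of the current block, and \(u_{ij}=\bar\partial^*G(\cdot)\) of it. The Green operator \(G\) is a fixed bounded operator, so this choice is holomorphic and single-valued in \(t\).

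Flatness \(\bar\partial^2=0\) is automatic on a curve, so no obstruction arises. Setting \(\Psi=g\Psi_0\) preserves both the filtration and the graded identifications, because \(g\) is unipotent upper triangular.

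\emph{Step 3: lower blocks.} In the presentation \(\Psi\), the subbundle \(\mathcal F^p\) is \(\bigoplus_{i\le s-p}E_i\). Griffiths transversality \(\mathcal D\mathcal F^p\subset\mathcal F^{p-1}\otimes K_X\) therefore says that \(\partial_{\mathcal D}\) maps \(E_i\) into \(\bigoplus_{j\le i+1}E_j\). This gives \((\partial_{\mathcal D})_{ji}=0\) for \(j>i+1\).

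The \((i+1,i)\) block is \(\mathcal C^\infty\)-linear, because the Leibniz term is diagonal. It induces the graded map \(\Gr^{s-i}\to\Gr^{s-i-1}\otimes K_X\), which corresponds to \(\theta_i\) under the \(\phi\)'s by hypothesis. Hence \((\partial_{\mathcal D})_{i+1,i}=\theta_i\).
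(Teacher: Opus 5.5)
Your proposal is correct and follows essentially the same route as the paper. You choose a splitting that is smooth along \(X\) and holomorphic in \(t\), then induct on the distance \(j-i\) with the Green operator and harmonic projection so that each upper block becomes harmonic; upper unipotent gauges preserve the filtration and the \(\phi_i\), and your Step~3 spells out the lower-block verification that the paper leaves as ``one can verify directly.''

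The one difference is minor. The Cousin problem in Step~1 is unnecessary: splitting holomorphically over \(U_\alpha\times\Delta^*\) for a Stein cover \(\{U_\alpha\}\) of \(X\), and gluing with a \(t\)-independent partition of unity on \(X\), already gives a splitting over all of \(\Delta^*\), which is what the paper does. This also avoids having to justify a Cousin problem whose coefficient space \(C^\infty(X,\Hom(\Gr^p_t,\mathcal F^{p+1}_t))\) varies with \(t\).
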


\begin{proof}
Using the standard Dolbeault description of extensions after choosing a
\(C^\infty\)-splitting, choose
successively a splitting of the filtration that is smooth along \(X\) and
holomorphic in the parameter.  The latter dependence follows by taking
local holomorphic splittings over a Stein cover of \(X\) and gluing them
with a partition of unity independent of \(t\).  In the resulting
identification with \(E_{\Delta^*}\), the Dolbeault operator has fixed
diagonal and strictly upper-triangular entries \(a_{ij}(t)\).

Write \(\bar\partial_0:=\operatorname{diag}(\bar\partial_{E_i})\), and let
\(\mathfrak n^r\) consist of block endomorphisms whose permitted
entries have \(j-i\geq r\). Thus
\begin{equation}\label{eq:nilpotent}
 \mathfrak n^r\mathfrak n^q\subset\mathfrak n^{r+q},
 \qquad \mathfrak n^s=0,
 \qquad
 \bar\partial_0\mathfrak n^r
       \subset\Omega^{0,1}(\mathfrak n^r).
\end{equation}
The current operator is \(\bar\partial:=\bar\partial_0+a\) parametrized by $t$, with
\(a\in\Omega^{0,1}(\mathfrak n^1)\).
For each pair \(i<j\), fix the harmonic projection \(\mathcal H_{ij}\) and a
complex-linear Green operator \(G_{ij}\) satisfying
\[
 \bar\partial_0G_{ij}b=b-\mathcal H_{ij}b.
\]
All \((0,1)\)-forms on a curve are Dolbeault closed, so this
identity applies to every upper entry.

Induct on \(r=1,\ldots,s-1\). Suppose all entries of distance
less than \(r\) are harmonic. Define a smooth endomorphism \(u_r\)
by
\[
 (u_r)_{ij}:=
 \begin{cases}
 -G_{ij}(a_{ij}),&j-i=r,\\
 0,&\text{otherwise},
 \end{cases}
 \qquad g_r:=I+u_r.
\]
The inverse of \(g_r\) is a finite polynomial because \(u_r\)
is nilpotent. The upper part of the transformed operator is
\[
 a^{g_r}:=g_r^{-1}ag_r+g_r^{-1}\bar\partial_0g_r.
\]
By \eqref{eq:nilpotent},
\begin{equation*}
 a^{g_r}-a-\bar\partial_0u_r
          \in\Omega^{0,1}(\mathfrak n^{r+1}).
\end{equation*}
Note that entries of distance less than
\(r\) are unchanged and entries of distance \(r\) become
\(\mathcal H_{ij}a_{ij}\). After \(s-1\) steps every entry is harmonic.
All operations are finite polynomial operations or fixed continuous
complex-linear operations on smooth sections. They preserve
single-valued holomorphic parameter dependence.
For \(s=1\) the induction is empty.

Every gauge is upper unipotent, hence preserves
\(F_{(e)}^\bullet\) and induces the identity on each graded block. One can verify directly that \eqref{eq:lowerblocks} also holds after the above successive gauge transformations.
\end{proof}

\subsection{The connection form}\label{subsec:connection-form}
On the finite-dimensional parameter space \(\mathscr E^+\), we write
\(d\) for the parameter differential previously denoted by \(\delta\).
In this subsection, we prove that the existence of a Simpson filtration (Lemma~\ref{lem:rees}) implies the exactness of the connection one-form of \(\nabla^{\mathrm{det}}\). We mainly use the curvature formula established in Lemma~\ref{lem:detconnection}.

Determinant multiplicativity \cite{KnudsenMumford1976} gives a canonical
identification
\[
 \lambda(E(e))\simeq\bigotimes_{i=1}^{s}\lambda(E_i)
\]
over all \(\mathscr E^+\). Choose a nonzero vector in each fixed factor.
Their product determines an everywhere nonzero holomorphic \textbf{determinant frame}
\(u(e)\), even where cohomology dimensions jump. 

\begin{lemma}\label{lem:extensionform}
There is a holomorphic function
\(f:\mathscr E^+\to\CC\), independent of the choice of connection
on \(E(e)\), such that for a holomorphic connection satisfying
Griffiths transversality for \(F_{(e)}^\bullet\) and whose graded Higgs bundle is
\((E,\theta)\), the connection form of
\(\nabla^{\mathrm{det}}\) in \(u(e)\) is
\begin{equation}\label{eq:exactform}
 df(e)+\frac{1}{2\pi\sqrt{-1}}\sum_{i=1}^{s-1}
             \int_X\tr(\theta_i\wedge de_{i,i+1}).
\end{equation}
In particular this form is exact.
\end{lemma}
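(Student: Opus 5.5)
The plan is to compute the connection form of \(\nabla^{\mathrm{det}}\) in the frame \(u(e)\) directly from the definition \eqref{eq:correction}. We work on the fixed smooth bundle \(E=\bigoplus_i E_i\) with the parameter-independent orthogonal sum metric \(h\), so that \(\bar\partial_V=\bar\partial_{(e)}\) and \(\delta\bar\partial_V=de\), which is strictly upper triangular. The form then splits into two pieces:
\[
 \alpha^{\mathrm Q,h}+\frac1{2\pi\sqrt{-1}}\int_X\tr\bigl((\partial_D-\partial_{V,h})\wedge de\bigr).
\]
Here \(\alpha^{\mathrm Q,h}=\partial\log\|u\|^2_{\mathrm Q,h}\) is the Quillen Chern connection form in the holomorphic frame \(u\).

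\emph{The correction term.} Since \(de\) has only blocks \((i,j)\) with \(i<j\), the trace \(\tr(A\wedge de)\) only sees the strictly lower blocks \(A_{ji}\), \(j>i\), of \(A=\partial_D-\partial_{V,h}\). The Chern connection of the orthogonal sum metric for \(\bar\partial_{(e)}\) has lower blocks \(-(e_{ij})^{*}\), which are \((1,0)\)-forms. By \eqref{eq:lowerblocks}, the lower blocks of \(\partial_D\) are \(\theta_i\) on the subdiagonal and \(0\) below it. The correction therefore equals
\[
 \frac1{2\pi\sqrt{-1}}\sum_i\int_X\tr(\theta_i\wedge de_{i,i+1})
 +\frac1{2\pi\sqrt{-1}}\sum_{i<j}\int_X\tr(e_{ij}^{*}\wedge de_{ij}),
\]
up to sign and ordering conventions. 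The first sum is the displayed term of \eqref{eq:exactform}, and the upper triangular part of \(\partial_D\) never enters, which is why \(f\) will not depend on the choice of connection. The second sum equals \(\partial\) of \(\frac{1}{2\pi\sqrt{-1}}\sum\int_X\tr(e_{ij}^*\wedge e_{ij})=\partial\bigl(c\sum\|e_{ij}\|^2_{L^2}\bigr)\) for a constant \(c\). This uses that \(e\) is linear on the vector space \(\mathscr E^+\).

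\emph{The Quillen part.} Under the Knudsen--Mumford multiplicativity used to define \(u\), the filtration gives exact sequences \(0\to F^{p+1}\to F^p\to E_{s-p}\to0\). The Quillen metric is not multiplicative, but the Bismut--Gillet--Soul\'e anomaly/Bott--Chern formula for short exact sequences gives \(\log\|u\|^2_{\mathrm Q,h}=\sum_i\log\|u_i\|^2_{\mathrm Q}+\int_X\widetilde{\operatorname{ch}}\) plus a term from the analytic torsion. The \(u_i\) are constant vectors in the fixed lines \(\lambda(E_i)\), so the first sum is constant. The goal is thus to show
\[
 \alpha^{\mathrm Q,h}=\partial\phi \quad\text{with}\quad \phi:=\log\|u\|^2_{\mathrm Q,h},
\]
and then, together with the correction, that the total non-\(\theta\) part of the form is a holomorphic \(1\)-form. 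Holomorphicity follows because the full form minus the holomorphic \(\theta\)-term is the connection form of a holomorphic connection in a holomorphic frame (Lemma~\ref{lem:detconnection}(ii)).

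\emph{Exactness of the non-\(\theta\) part.} Its \(d\) is the curvature \eqref{eq:correctedcurv} minus \(d\) of the \(\theta\)-term. Here \(d\partial_D\) has lower blocks \(d\theta_i=0\), so \(\tr(d\partial_D\wedge de)=0\) and the curvature vanishes on \(\mathscr E^+\). The \(\theta\)-term is also closed, because \(\theta\) is constant and \(e\) is linear. Hence the non-\(\theta\) part is a closed holomorphic \(1\)-form on the contractible space \(\mathscr E^+\), so it equals \(df\) with \(f\) holomorphic. Its independence of the connection follows from the independence of the lower blocks noted above. This closedness route avoids computing torsion explicitly.

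The main obstacle is the role of the correction term. Strictly, one must check that \(\alpha^{\mathrm Q,h}\) plus the \(\tr(e^*\wedge de)\) piece is holomorphic and does not depend on \(\partial_D\). The Quillen part is manifestly independent of \(\partial_D\), and so is the \(e^*\) piece. The only remaining concern is whether the harmonic representatives introduce non-holomorphic terms: the closed form \(\alpha^{\mathrm Q,h}+(\ldots)\) is of type \((1,0)\) with \(\bar\partial\)-derivative equal to the \((1,1)\)-curvature, which vanishes. One must verify that this vanishing holds for each connection separately.
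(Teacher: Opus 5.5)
There is a genuine gap in your closedness and holomorphicity step. Both facts you use are statements on the base $B$ of a holomorphic family of flat connections: the connection form of $\nabla^{\mathrm{det}}$ in a holomorphic frame is holomorphic by Lemma~\ref{lem:detconnection}(ii), and its curvature \eqref{eq:correctedcurv} vanishes because the lower blocks of $\partial_D$ are constant. So they only show that $e^*\zeta$ is a closed holomorphic $1$-form on $B$, where $\zeta$ is your non-$\theta$ part. You then conclude that $\zeta$ is closed and holomorphic on $\mathscr E^+$, but for $s\geq2$ no such family covers an open subset of $\mathscr E^+$.

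To see this, integrate the trace of the diagonal blocks of $F_D=0$. This forces $\int_X\tr(e_{k,k+1}\wedge\theta_k)=2\pi\sqrt{-1}(\deg E_1+\cdots+\deg E_k)$. The left side is a nonzero linear functional of $e$ by Serre duality, because stability gives $\theta_k\neq0$. Hence the admissible $e$ lie in a proper affine subspace; for the rank-two uniformizing Higgs bundle they form a single point of $\mathscr E^+=H^1(X,K_X)$. On a base such as $\Delta^*$, a holomorphic $1$-form is automatically closed, and this says nothing about its periods. Proposition~\ref{prop:valuation} needs exactly that the period around the puncture vanishes, and this comes from a primitive $f$ defined on all of the simply connected space $\mathscr E^+$. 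Your remark that one must check vanishing ``for each connection separately'' misidentifies the obstacle: the issue is the domain on which the form is closed.

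The missing step is to prove that
$\zeta=\alpha_{\mathrm Q}+\frac1{2\pi\sqrt{-1}}\sum_{i<j}\int_X\tr(e_{ij}^\dagger\wedge de_{ij})$
is closed on $\mathscr E^+$ itself. Use only the family of holomorphic bundles $e\mapsto E(e)$, which exists over all of $\mathscr E^+$ and needs no connections. For the orthogonal metric, the Chern connection of $\bar\partial_{(e)}$ has lower blocks $-e_{ij}^\dagger$. The Quillen curvature formula \eqref{eq:quillencurv} then gives $d\alpha_{\mathrm Q}=-\frac1{2\pi\sqrt{-1}}\sum_{i<j}\int_X\tr(de_{ij}^\dagger\wedge de_{ij})$, which exactly cancels $d$ of the second term. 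Since $\zeta$ is a closed $(1,0)$-form, it is holomorphic, so $\zeta=df$ on $\mathscr E^+$; this is the paper's argument.

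Your curvature route can be repaired in the same spirit. Apply the computation behind Lemma~\ref{lem:detconnection}(ii), which holds for arbitrary holomorphic families of pairs, to the constant non-flat family $\bigl(\bar\partial_{(e)},\operatorname{diag}(\partial_{E_i,h})+\theta\bigr)$ over all of $\mathscr E^+$. As written, however, you appeal to families of flat connections, which does not cover this. The anomaly and torsion discussion for the Quillen part is not needed.
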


\begin{proof}
Write $e=(e_{ij})$ and let \(\alpha_{\mathrm Q}\) be the Quillen connection form in \(u(e)\).
The \((j,i)\)-entry of the Chern connection associated with
\(\bar\partial_{(e)}\), for \(i<j\), is \(-e_{ij}^\dagger\).
Applying \eqref{eq:quillencurv} to the family
\[
e\longmapsto E(e),\qquad e\in\mathscr E^+,
\]
we obtain 
\[
 d\alpha_{\mathrm Q}
       +\frac{1}{2\pi\sqrt{-1}}\sum_{i<j}
          \int_X\tr(d e_{ij}^\dagger\wedge de_{ij})=0.
\] Denote the $(1,0)$-form \[\zeta:=\alpha_{\mathrm Q}
       +\frac{1}{2\pi\sqrt{-1}}\sum_{i<j}
          \int_X\tr( e_{ij}^\dagger\wedge de_{ij}).\]
Hence \(d\zeta=0\) implies \(\zeta\) is holomorphic.
Since \(\mathscr E^+\) is a complex vector space, it follows that
\(\zeta=df(e)\) for a single-valued holomorphic function $f$.

Restrict \eqref{eq:correction} to the parameter space
\(\mathscr E^+\). Since $\delta\bar\partial_{(e)}=(de_{ij})_{i<j}$, the trace in the correction term is
\[
\tr\!\left(
(\partial_D-\partial_{(e),h})\wedge d\bar\partial_{(e)}
\right)
=
\sum_{i<j}
\tr\!\left(
(\partial_D-\partial_{(e),h})_{ji}\wedge de_{ij}
\right).
\]
By \eqref{eq:lowerblocks} and
\((\partial_{(e),h})_{ji}=-e_{ij}^{\dagger}\), we have
\[
(\partial_D-\partial_{(e),h})_{ji}
=
\begin{cases}
\theta_i+e_{i,i+1}^{\dagger},& j=i+1,\\[1mm]
e_{ij}^{\dagger},& j>i+1.
\end{cases}
\]
Hence \eqref{eq:correction}, together with $df(e)=\zeta$ gives \eqref{eq:exactform}. 
\end{proof}

\begin{remark}
For \(s=1\), the space \(\mathscr E^+\) is a point, the sums are empty,
and the determinant form is zero. All other cases use the same exactness
calculation.
\end{remark}

\subsection{Extendability of the determinant frame}\label{subsec:determinant-frame}
We apply the exactness from Section~\ref{subsec:connection-form} to a punctured one-parameter family. It forces the determinant frame induced by the filtration and the graded identifications to extend holomorphically and without zeros across the puncture.
\begin{proposition}\label{prop:valuation}
Let \(\Delta\) be a disk with coordinate \(t\), and let
\((\mathcal V,\mathcal D)\) be a holomorphic family of connections on \(X\)
parametrized by \(\Delta\). Suppose that over
\(\Delta^*:=\Delta\setminus\{0\}\) a Simpson filtration and graded
identifications have been chosen so that
Lemma~\ref{lem:presentation} applies. Let
\[
e:\Delta^*\longrightarrow\mathscr E^+
\]
be the resulting holomorphic map. If the determinant frame
\(u(e(t))\) is meromorphic at \(0\), then it extends across \(0\)
to a nowhere-vanishing holomorphic frame.
\end{proposition}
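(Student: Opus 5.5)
We compare the determinant frame \(u(e(t))\) with a frame that is known to behave well across \(t=0\), using the determinant connection \(\nabla^{\mathrm{det}}\) of Lemma~\ref{lem:detconnection}, which is defined on all of \(\lambda_\Delta\), including over \(0\).

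First, since \(\lambda_\Delta=\det R(\pi_\Delta)_*(\mathcal V\otimes\kappa)\) is a holomorphic line bundle on the whole disk, choose (after shrinking \(\Delta\)) a nowhere-vanishing holomorphic frame \(\sigma\) of \(\lambda_\Delta\). By Lemma~\ref{lem:detconnection}(ii),(iv), \(\nabla^{\mathrm{det}}\) is a holomorphic connection on \(\lambda_\Delta\), so \(\nabla^{\mathrm{det}}\sigma=\beta\,\sigma\) with \(\beta\) a holomorphic one-form on all of \(\Delta\).

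On \(\Delta^*\), the isomorphism \(\Psi\) of Lemma~\ref{lem:presentation} identifies \(\lambda_{\Delta^*}\) with the pullback of \(\lambda\) over \(\mathscr E^+\) via \(e\). Gauge invariance and base-change functoriality (Lemma~\ref{lem:detconnection}(iv)) identify \(\nabla^{\mathrm{det}}\) with the pullback of the determinant connection on the extension-class family. Lemma~\ref{lem:extensionform} then gives
\[
\nabla^{\mathrm{det}}u(e(t))=\Bigl(d(f\circ e)+\tfrac{1}{2\pi\sqrt{-1}}\sum_{i}\int_X\tr(\theta_i\wedge de_{i,i+1})\Bigr)u(e(t))=dF(t)\,u(e(t)),
\]
where \(F(t):=f(e(t))+\frac{1}{2\pi\sqrt{-1}}\sum_i\int_X\tr(\theta_i\wedge e_{i,i+1}(t))\) is single-valued and holomorphic on \(\Delta^*\), because \(\theta_i\) is constant.

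Write \(u(e(t))=g(t)\sigma\) with \(g\) holomorphic and nowhere zero on \(\Delta^*\). By hypothesis, \(g\) is meromorphic at \(0\). Comparing connection forms gives
\[
\frac{dg}{g}+\beta=dF \quad\text{on } \Delta^*.
\]

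Suppose \(g=t^m h\) with \(h(0)\neq0\). Then \(dF=m\,\frac{dt}{t}+\frac{dh}{h}+\beta\). Here \(\frac{dh}{h}+\beta\) is holomorphic near \(0\), so \(dF-m\,\frac{dt}{t}\) is holomorphic across \(0\). Taking residues, the exact form \(dF\) of a single-valued function has zero residue around \(0\), so \(m=0\). Hence \(g=h\) extends holomorphically with \(g(0)\neq0\), and \(u(e(t))\) extends to a nowhere-vanishing frame.

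The main obstacle is the comparison step on \(\Delta^*\): justifying that \(\nabla^{\mathrm{det}}\) on \(\lambda_{\Delta^*}\) coincides, under \(\Psi\) and the Knudsen--Mumford multiplicativity, with the pullback of the connection on \(\mathscr E^+\) in the frame \(u(e)\). This needs \(\Psi\) to be a gauge transformation that is holomorphic in \(t\), so that Lemma~\ref{lem:detconnection}(iv) applies. It also needs the connection \(\mathcal D\), transported by \(\Psi\), to satisfy the hypotheses of Lemma~\ref{lem:extensionform}: Griffiths transversality and lower blocks as in \eqref{eq:lowerblocks}. Lemma~\ref{lem:presentation} supplies both. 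The meromorphy hypothesis is essential: it excludes an essential singularity of \(g\), and once it is assumed the residue argument above only needs single-valuedness of \(F\), which holds because \(e\) is single-valued.
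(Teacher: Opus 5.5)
Your proposal is correct and follows the paper's proof essentially step for step. You write $u(e(t))=t^m h(t)\sigma$ against a frame $\sigma$ that is holomorphic on all of $\Delta$, use that the connection form of $\sigma$ is holomorphic across $0$, and conclude $m=0$ because Lemma~\ref{lem:extensionform} makes the connection form in $u(e(t))$ equal to $dF$ for a single-valued holomorphic $F$ on $\Delta^*$; the only cosmetic difference is that you phrase this last step via residues rather than periods around $0$.
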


\begin{proof}
By Lemma~\ref{lem:presentation}, after identifying
\(\mathcal V|_{X\times\Delta^*}\) with \(E_{\Delta^*}\), the Dolbeault
operator is \(\bar\partial_{(e(t))}\).
By the gauge invariance of Lemma~\ref{lem:detconnection}, we may compute
the determinant connection in this presentation.

Choose a nowhere-zero holomorphic frame \(u_0\) of the determinant line
on \(\Delta\). Since \(u(e(t))\) is meromorphic at \(0\), after
shrinking \(\Delta\) we may write
\[
u(e(t))=t^w a(t)u_0,
\]
where \(w\in\mathbb Z\) and \(a\) is holomorphic and nowhere zero.
If \(\alpha_0\) is the connection form in \(u_0\), then the connection
form in \(u(e(t))\) is
\[
 \alpha_0+w\frac{dt}{t}+d\log a(t).
\]
Since \(\alpha_0\) is holomorphic on \(\Delta\), its period around \(0\)
is therefore \(2\pi\sqrt{-1}\,w\).

On the other hand, Lemma~\ref{lem:extensionform} gives this connection
form as
\[
d\left(
f(e(t))+\frac{1}{2\pi\sqrt{-1}}\sum_i
\int_X\tr(\theta_i\wedge e_{i,i+1}(t))
\right).
\]
The function in parentheses is single-valued and holomorphic on
\(\Delta^*\), so this period is zero. Hence \(w=0\), and \(u(e(t))\)
extends holomorphically and nonvanishingly across \(0\).
\end{proof}

\section{Determinant degree and Higgs sublattice}\label{sec:determinant-degree}
In this section, we turn the extendability result of Proposition~\ref{prop:valuation} into a homothety-rigidity statement for the limiting Higgs lattice (defined below): it must lie in the same homothety class as the constant family of Higgs bundles; see Lemma~\ref{lem:scalarlattice}. This rigidity will be used directly in the next section to prove the main theorem.

Let \(R\) be a DVR with uniformizer \(t\) and fraction field \(K\),
and use the constant extension \((E_R,\theta_R)\) of
Definition~\ref{def:constant-extensions}. A \textbf{coherent Higgs sublattice}
\(\HH\subset E_R\) means a coherent \(\OO_{X_R}\)-submodule such that
\[
 \theta_R(\HH)\subset\HH\otimes K_X,\qquad
 \HH[1/t]:=\HH\otimes_RK=E_R\otimes_RK=:E_R[1/t].
\]
No local-freeness assumption on \(\HH\) is made in this definition.
We suppress pullbacks of bundles from \(X\) to \(X_R\) from the notation.

\begin{lemma}[Determinant--degree identity]\label{lem:detdegree}
Let \(\Delta\) be a disk with coordinate \(t\), let
\(R:=\mathcal O_{\Delta,0}\), and let \(\HH\subset E_R\) be a coherent
Higgs sublattice. Put
\[
 Q:=E_R/\HH,\qquad Q_j:=t^jQ/t^{j+1}Q.
\]
Suppose that the meromorphic determinant frame \(u(e(t))\) on
\(\Delta^*\) is identified, via determinant multiplicativity and
\(\HH[1/t]=E_R[1/t]\), with the fixed determinant frame of \(E_R\).
Then
\[
 \sum_j\deg Q_j=-\ord_0\bigl(u(e(t))\bigr).
\]
\end{lemma}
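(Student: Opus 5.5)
The plan is to compare two determinant frames on the same line \(\lambda_R\bigl(\HH\bigr)\) over \(\operatorname{Spec}R\), using the short exact sequence \(0\to\HH\to E_R\to Q\to0\) together with the multiplicativity of the Knudsen--Mumford determinant. Here \(\HH\) plays the role of the lattice that restricts, over \(\Delta^*\), to the family \(\mathcal V\). Since \(\HH[1/t]=E_R[1/t]\), the module \(Q\) is supported on the central fiber \(X\times\{0\}\). It is a coherent \(\OO_{X_R}\)-module killed by some power \(t^N\). Multiplicativity gives a canonical isomorphism
\[
 \lambda_R(E_R)\simeq\lambda_R(\HH)\otimes\lambda_R(Q),
\]
and over \(\Delta^*\) the factor \(\lambda_R(Q)\) is canonically trivial. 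The frame \(u(e(t))\) is a section of \(\lambda_R(\HH)[1/t]\). By hypothesis it corresponds to the fixed frame \(u_0\) of \(\lambda_R(E_R)\), which is nowhere vanishing because \(E_R\) is constant. Hence \(\ord_0 u(e(t))\) equals minus the order of the canonical generic section \(\mathbf 1\) of \(\lambda_R(Q)\), or its inverse depending on convention.

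The key step is to compute that order. I would first filter \(Q\) by \(t^jQ\). Each graded piece \(Q_j=t^jQ/t^{j+1}Q\) is a coherent sheaf on \(X\), pushed forward along \(X\times\{0\}\hookrightarrow X_R\). Multiplicativity then gives \(\lambda_R(Q)\simeq\bigotimes_j\lambda_R(Q_j)\), compatibly with the canonical generic trivializations, so it suffices to treat a single \(Q_j\).

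For a coherent sheaf \(G\) on \(X\) viewed as a sheaf on \(X_R\) supported on the central fiber, \(R\pi_*(G\otimes\kappa)\) is a complex of \(R\)-torsion modules. Its determinant with the canonical generic section has order
\[
 \operatorname{length}_RH^0-\operatorname{length}_RH^1=\chi(X,G\otimes\kappa).
\]
Resolving \(G\) by \(0\to G\xrightarrow{}\,\cdot\,\) is unnecessary: one uses the two-term resolution \(0\to G_R\xrightarrow{t}G_R\to G\to0\) for \(G\) locally free. For torsion parts one uses additivity over skyscrapers, where the order is just the length. This gives an order of exactly \(\chi(G\otimes\kappa)\) for the divisor of the canonical section of \(\det\) of a torsion complex.

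By Riemann--Roch on \(X\), \(\chi(G\otimes\kappa)=\deg G+\rk G\,(\deg\kappa+1-g)=\deg G\), since \(\deg\kappa=g-1\). This is exactly where the theta characteristic is used. Summing over \(j\) gives \(\sum_j\deg Q_j\) as the order of \(\mathbf 1\) in \(\lambda_R(Q)\). Under the convention \(\Theta=\det H^0\otimes(\det H^1)^{-1}\), with \(\lambda(E_R)=\lambda(\HH)\otimes\lambda(Q)\), we get \(u(e(t))=u_0\otimes\mathbf 1^{-1}\). That yields \(\ord_0u(e(t))=-\sum_j\deg Q_j\).

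The main obstacle is bookkeeping rather than any conceptual difficulty, in two places. First, one must fix the sign conventions for the canonical section of the determinant of an acyclic-after-localization complex, checking on \(R/t\) that \(\det(R\xrightarrow{t}R)\) has order \(\pm1\) consistently with the chosen orientation. Second, one must verify that the identification of \(u(e(t))\) with \(u_0\) assumed in the statement is precisely the multiplicativity isomorphism above. I would settle the sign by testing the simplest example, \(\HH=\OO(-p)\)-type modification \(\HH=\ker(E_R\to\CC_p)\), where \(\lambda(\HH)\) visibly has a frame with the predicted zero or pole.
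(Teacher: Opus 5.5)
Your proposal is correct and is essentially the paper's proof. In both, multiplicativity along \(0\to\HH\to E_R\to Q\to0\) reduces \(\ord_0u(e(t))\) to minus the order of the canonical generic section of \(\det R\Gamma(X_R,Q\otimes\kappa)\), namely \(-\sum_q(-1)^q\operatorname{length}_RH^q\). The \(t\)-adic filtration and Riemann--Roch with \(\deg\kappa=g-1\) then turn this into \(-\sum_j\deg Q_j\). Your extra justification of the length formula via \(G_R\xrightarrow{t}G_R\) and skyscrapers (which the paper takes for granted) and your sign check are fine. Just delete the garbled half-sentence about resolving \(G\).
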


\begin{proof}
Set
\[
 C_Q:=R\Gamma(X_R,Q\otimes\kappa).
\]
Since \(Q\) is supported on the special fiber, \(C_Q\) is a torsion
perfect complex over \(R\). From
\(0\to\HH\to E_R\to Q\to0\) and our convention
\(\lambda:=\det R\Gamma\), determinant multiplicativity gives
\[
 \ord_0\bigl(u(e(t))\bigr)
 =-\sum_q(-1)^q\operatorname{length}_R H^q(C_Q).
\]
Using the \(t\)-adic filtration of \(Q\) and additivity of Euler
characteristic,
\[
 \ord_0\bigl(u(e(t))\bigr)
 =-\sum_j\chi(X,Q_j\otimes\kappa)
 =-\sum_j\deg Q_j,
\]
where the last equality follows from Riemann--Roch and
\(\deg\kappa=g-1\).
\end{proof}

The following DVR argument is in the spirit of Langton's valuative method
\cite{Langton1975}; the scalar conclusion below uses in addition the
stability of the fixed Higgs bundle.

\begin{lemma}\label{lem:scalarlattice}
Let \(R\) be a DVR over \(\CC\) with uniformizer
\(t\) and residue field \(\CC\). Suppose that
\(\HH\subset E_R\) is a coherent Higgs sublattice, and that
\(Q:=E_R/\HH\) is killed by a power of \(t\). Put
\[
 Q_j:=t^jQ/t^{j+1}Q.
\]
If \(\sum_j\deg Q_j=0\), then there exists an integer \(m\geq0\)
such that $\HH=t^mE_R.$
\end{lemma}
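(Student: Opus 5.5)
The plan is to normalize $\HH$ so that it is not contained in $tE_R$, and then to filter $Q$ by the images of $t^jE_R$ instead of the $t$-adic filtration. Each graded piece of that filtration is the quotient of $E$ by a $\theta$-invariant subsheaf, so stability of $(E,\theta)$ makes every degree nonnegative. The hypothesis $\sum_j\deg Q_j=0$ then forces every piece to vanish.

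\emph{Step 1 (normalization).} Since $\HH[1/t]=E_R[1/t]$, we have $\HH\neq0$. By Krull's intersection theorem, applied stalkwise on the coherent sheaf $E_R$, $\HH\not\subset t^kE_R$ for $k\gg0$. Let $m\geq0$ be maximal with $\HH\subset t^mE_R$, and replace $\HH$ by $t^{-m}\HH$. This is again a coherent Higgs sublattice with quotient killed by a power of $t$. The numbers $\deg Q_j$ are unchanged, because multiplication by $t^m$ identifies $E_R/\HH$ with $t^mE_R/t^m\HH$, and $E_R/t^mE_R$ is an extension of $m$ copies of $E$, each of degree $0$. So it suffices to show: if $\HH\subset E_R$ and $\HH\not\subset tE_R$, then $\HH=E_R$.

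\emph{Step 2 (a second filtration of $Q$).} For $j\geq0$ let $G_j\subset E$ be the image of $t^{-j}(\HH\cap t^jE_R)$ in $E=E_R/tE_R$. Each $G_j$ is a $\theta$-invariant coherent subsheaf, since $\theta_R$ preserves $\HH$ and $t^jE_R$. Because $t(\HH\cap t^jE_R)\subset\HH\cap t^{j+1}E_R$, we get $G_0\subset G_1\subset\cdots$. Also $G_0\neq0$ by the normalization, and $G_j=E$ for $j$ large since $t^NE_R\subset\HH$. The filtration of $Q$ by the images of $t^jE_R$ has graded pieces
\[
t^jE_R\big/\bigl((\HH\cap t^jE_R)+t^{j+1}E_R\bigr)\;\simeq\;E/G_j .
\]
For any coherent sheaf $F$ on $X$, Riemann--Roch with $\deg\kappa=g-1$ gives $\chi(X,F\otimes\kappa)=\deg F$. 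Additivity of Euler characteristic along the $t$-adic filtration and along this new filtration therefore gives
\[
\sum_j\deg Q_j=\chi(X_R,Q\otimes\kappa)=\sum_j\deg(E/G_j).
\]
This identification, that both finite filtrations compute the same invariant $\chi(Q\otimes\kappa)$, is the step I expect to need the most care. It amounts to a length count over $R$ of $R\Gamma(X_R,Q\otimes\kappa)$, exactly as in the proof of Lemma~\ref{lem:detdegree}.

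\emph{Step 3 (stability).} Each $G_j$ is a nonzero $\theta$-invariant subsheaf of the stable degree-zero Higgs bundle $(E,\theta)$. Hence $\deg G_j<0$ unless $G_j=E$. This covers two cases: if $G_j$ has smaller rank, use stability applied to its saturation; if it has full rank but is proper, then $E/G_j$ is nonzero torsion. So $\deg(E/G_j)=-\deg G_j\geq0$, with equality iff $G_j=E$. The sum is $0$ by hypothesis, so every $G_j=E$, and in particular $G_0=E$, i.e.\ $\HH+tE_R=E_R$. Then $Q=tQ$, and since $t$ acts nilpotently on $Q$ we get $Q=0$. Thus $\HH=E_R$, or $\HH=t^mE_R$ before the normalization of Step 1.
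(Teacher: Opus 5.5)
Your argument is correct and is essentially the paper's proof: your $G_j$ is exactly the paper's kernel $K_j=\ker\bigl(E\to t^jQ/t^{j+1}Q\bigr)$. Moreover, the image of $t^jE_R$ in $Q=E_R/\HH$ is precisely $t^jQ$, so your ``second filtration'' is the $t$-adic filtration itself, and the Euler-characteristic comparison you flagged as delicate reduces to the identity $Q_j\simeq E/G_j$. The only difference is cosmetic: you divide out the maximal power $t^m$ first, so that $G_0\neq0$ excludes the case $Q_j\simeq E$, whereas the paper keeps $\HH$ as given, reads off $m$ as the index where $K_j$ jumps from $0$ to $E$, and then checks both inclusions $t^mE_R\subset\HH\subset t^mE_R$.
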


\begin{proof}
For \(\bar v\in E=E_R/tE_R\), choose a lift \(v\in E_R\) and set
\[
 \psi_j(\bar v):=[t^jv]\in Q_j=t^jQ/t^{j+1}Q.
\]
Changing the lift by \(tw\) changes \(t^jv\) by \(t^{j+1}w\), so
\(\psi_j\) is well defined. It is surjective, and it is a Higgs map
because \(\HH\) is a Higgs sublattice. Define \(K_j:=\ker(\psi_j)\). Thus we have $Q_j\cong E/K_j$. Multiplication by \(t\) gives \(K_j\subset K_{j+1}\).

Each \(Q_j\) is a coherent Higgs quotient of the stable degree-zero
Higgs bundle \((E,\theta)\). We \textbf{claim} that $\deg Q_j\geq 0,$ with equality only for \(Q_j=0\) or \(Q_j\simeq E\).

\noindent\textbf{Case 1: \(0<\rk K_j<n\).}
Stability gives $\deg K_j\leq\deg K_j^{\mathrm{sat}}<0,$
hence
\[
\deg Q_j=-\deg K_j>0.
\]

\noindent\textbf{Case 2: \(\rk K_j=n\).}
If \(K_j\neq E\), then \(Q_j\) is a nonzero torsion sheaf, so
\[
\deg Q_j=\operatorname{length}(Q_j)>0.
\]
If \(K_j=E\), then \(Q_j=0\).

\noindent\textbf{Case 3: \(\rk K_j=0\).}
Then \(K_j=0\), hence \(Q_j\simeq E\) and \(\deg Q_j=0\).

This proves the claim. Since we assume $\sum\deg Q_j=0$, every \(Q_j\) has degree zero.
Thus, for some \(m\),
\[
 K_j=0\ (j<m),\qquad K_j=E\ (j\geq m).
\]
The equality \(Q_m=0\) gives \(t^mQ=t^{m+1}Q\). Iterating and
using nilpotence yields \(t^mQ=0\), hence \(t^mE_R\subset\HH\).
Conversely, if \(\HH\subset t^jE_R\) and \(j<m\), write its
local section as \(t^jw\). The reduction of \(w\) belongs to
\(K_j=0\), so the section is divisible by \(t^{j+1}\).
Induction proves \(\HH\subset t^mE_R\).
\end{proof}

In the setting of Proposition~\ref{prop:specialization},
Proposition~\ref{prop:valuation} gives
\(\ord_0 u(e(t))=0\). Lemma~\ref{lem:detdegree} then yields
\(\sum_j\deg Q_j=0\), so Lemma~\ref{lem:scalarlattice} applies.

\section{Proof of Theorem~\ref{thm:main}}\label{sec:proof-main}

We first prove Simpson's closedness conjecture for a one-parameter degeneration.

\begin{proposition}\label{prop:specialization}
Let \((C,c_0)\) be a smooth pointed algebraic curve, let \(\mathcal V\)
be an algebraic vector bundle on \(X\times C\), and let
\(\mathcal D:\mathcal V\to\mathcal V\otimes\operatorname{pr}_X^*K_X\) be an algebraic relative
connection along \(X\), defined across \(c_0\). Suppose, after a
possible finite base change, its generic connection is equipped with a Griffiths-transverse filtration whose graded Higgs bundle is
identified with \((E,\theta)\). Then its special connection
lies in \(W_{[(E,\theta)]}^1\).
\end{proposition}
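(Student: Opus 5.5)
The plan is to reduce to a one-parameter analytic degeneration, use Proposition~\ref{prop:valuation} to show the determinant frame has order zero, and then use Lemmas~\ref{lem:detdegree} and~\ref{lem:scalarlattice} to force the limiting graded object to be a common shift of the constant one.

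\emph{Reduction.} Perform the finite base change and localize at \(c_0\). This replaces \(C\) by \(\operatorname{Spec}R\), where \(R\) is a DVR over \(\CC\) with uniformizer \(t\) and fraction field \(K\). The generic connection \((\mathcal V_K,\mathcal D_K)\) then carries a Griffiths-transverse filtration \(F_K^\bullet\) with graded identifications \(\phi_{i,K}:\Gr^{s-i}_{F_K}\mathcal V_K\simeq E_i\otimes K\) intertwining the Higgs maps with \(\theta_i\). Since everything is algebraic, we may pass to a small analytic disk \(\Delta\) around \(c_0\). Over \(\Delta^*\) the filtration and identifications are holomorphic, so Lemma~\ref{lem:presentation} produces a single-valued holomorphic map \(e:\Delta^*\to\mathscr E^+\). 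The determinant frame \(u(e(t))\) is obtained from algebraic data, namely determinant multiplicativity applied to the algebraic filtration and the \(\phi_{i,K}\). It is therefore a rational section of the line bundle \(\lambda(\mathcal V_R)\), hence meromorphic at \(0\). Proposition~\ref{prop:valuation} then gives \(\ord_0 u(e(t))=0\) relative to a holomorphic frame of \(\lambda(\mathcal V_R)\).

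\emph{Saturation and the Higgs sublattice.} Define \(\mathcal F^p:=\mathcal V_R\cap F^p_K\), the saturations inside \(\mathcal V_R\). These are saturated subsheaves on the regular surface \(X_R\), hence reflexive and locally free, with torsion-free quotients. Griffiths transversality \(\mathcal D\mathcal F^p\subset\mathcal F^{p-1}\otimes K_X\) holds generically and therefore extends by saturation. The graded pieces \(\Gr^{s-i}_{\mathcal F}\mathcal V_R\) are torsion-free, and via \(\phi_{i,K}\) they become lattices in \(E_i\otimes K\). The induced graded Higgs field is compatible with \(\theta_R\). After multiplying by a common power \(t^N\), their direct sum is a coherent Higgs sublattice \(\HH\subset E_R\) in the sense of Section~\ref{sec:determinant-degree}. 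Determinant multiplicativity gives \(\lambda(\mathcal V_R)\simeq\lambda(\HH)\) up to the shift \(t^N\). Keeping track of this normalization, the identity \(\ord_0 u(e(t))=0\) translates, through Lemma~\ref{lem:detdegree}, into \(\sum_j\deg Q_j=0\) for \(Q=E_R/\HH\).

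\emph{Conclusion.} Lemma~\ref{lem:scalarlattice} now gives \(\HH=t^mE_R\). Thus, after rescaling the identifications by the common power \(t^{-m-N}\), each graded piece \(\Gr^{s-i}_{\mathcal F}\mathcal V_R\) is identified with \((E_i)_R\) as a Higgs family. In particular these pieces are locally free and flat over \(R\), so the filtration restricts to the special fiber as a filtration by subbundles. That restricted filtration is Griffiths transverse for the special connection, and its graded Higgs bundle is \((E,\theta)\). Lemma~\ref{lem:rees} then places the special connection in \(W^1_{[(E,\theta)]}\).

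\emph{Main obstacle.} The delicate step is the bookkeeping between the three normalizations of the determinant line. These are the holomorphic frame of \(\lambda(\mathcal V_R)\) used in Proposition~\ref{prop:valuation}, the frame \(u(e(t))\) built from the \(\phi_i\), and the identification with \(\lambda(\HH)\) required in Lemma~\ref{lem:detdegree}. One must check that the saturated filtration of \(\mathcal V_R\) induces \(\lambda(\mathcal V_R)\simeq\bigotimes_i\lambda(\Gr^{s-i})\) compatibly with the shift \(t^N\). A single uniform shift contributes no net order, because \(\deg E=0\) gives \(\chi(E\otimes\kappa)=0\). If individual blocks required different shifts, one would instead track the orders blockwise, and the argument has to confirm this does not happen. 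A secondary point is flatness of the graded pieces at the special fiber, which is exactly what \(\HH=t^mE_R\) delivers.
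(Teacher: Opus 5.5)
Your proposal is correct and follows the paper's proof essentially step by step. You saturate the generic Simpson filtration in $\mathcal V_R$ to get locally free pieces with torsion-free graded quotients, clear the poles of the graded identifications by one common power of $t$ to obtain a coherent Higgs sublattice $\HH\subset E_R$, combine Proposition~\ref{prop:valuation}, Lemma~\ref{lem:detdegree} and Lemma~\ref{lem:scalarlattice} to get $\HH=t^mE_R$, and conclude with Lemma~\ref{lem:rees}, exactly as the paper does. Your explicit check that the common shift $t^N$ changes the determinant frame by $t^{N\chi(E\otimes\kappa)}=t^{N\deg E}=1$ makes precise a point the paper leaves implicit; the only slip is cosmetic, since the final rescaling of the original identifications should be by $t^{N-m}$, not $t^{-m-N}$.
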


\begin{proof}
Work over the local DVR \(R\), with fraction field \(K\) and uniformizer
\(t\). Write the generic Simpson filtration in decreasing form
\[
 \mathcal V_K=F_K^0\supset F_K^1\supset\cdots\supset F_K^s=0,
 \qquad
 \Gr_F^{\,s-i}\mathcal V_K\simeq E_{i,K},
\]
and intersect every step with the regular family:
\[
 A^p:=\mathcal V_R\cap F_K^p,\qquad
 G_i:=A^{s-i}/A^{s-i+1}.
\]
Thus \(A^0=\mathcal V_R\) and \(A^s=0\). Let
\(j:X_K\hookrightarrow X_R\) be the inclusion of the generic fiber. Then
\[
 A^p=\ker\!\left(\mathcal V_R\longrightarrow
 j_*(\mathcal V_K/F_K^p)\right).
\]
Since \(j\) is quasi-compact and quasi-separated,
\(j_*(\mathcal V_K/F_K^p)\) is quasi-coherent; hence \(A^p\) is a
quasi-coherent subsheaf of the coherent sheaf \(\mathcal V_R\), and therefore
coherent because \(X_R\) is Noetherian
\cite[Tags~01LC and~01Y1]{StacksProject}.

Since \(\mathcal V_R/A^p\) embeds into \(j_*(\mathcal V_K/F_K^p)\) and
\(\mathcal V_K/F_K^p\) is torsion-free, the quotient \(\mathcal V_R/A^p\) is
torsion-free.  At every closed point of the regular surface \(X_R\),
the depth lemma applied to
\[
0\longrightarrow A^p\longrightarrow\mathcal V_R
\longrightarrow\mathcal V_R/A^p\longrightarrow0
\]
gives \(\operatorname{depth}A^p\geq2\)
\cite[Tag~0BUV]{StacksProject}.  Since \(A^p\subset\mathcal V_R\) is
torsion-free, it follows that \(A^p\) satisfies Serre's condition
\((S_2)\), and hence is reflexive
\cite[Tag~0AY6]{StacksProject}.  As \(X_R\) is regular of dimension
two, every coherent reflexive sheaf on \(X_R\) is locally free
\cite[Tag~0B3N]{StacksProject}.  Thus each \(A^p\) is locally free. Each \(G_i\) is torsion-free, because it embeds in
\(\mathcal V_R/A^{s-i+1}\), but is not yet assumed locally free.

For \(1\leq p\leq s-1\), the obstruction to
\[
\mathcal D A^p\subset A^{p-1}\otimes K_X
\]
lies in $H^0\!\left(
X_R,\,
\mathcal Hom\!\left(A^p,\mathcal V_R/A^{p-1}\right)
\otimes \operatorname{pr}_X^*K_X
\right).$ It vanishes after inverting \(t\),
and its target has no \(t\)-torsion, so it vanishes.
The connection therefore induces \(\OO\)-linear graded Higgs maps, for \(i<s\),
\[
 G_i\longrightarrow G_{i+1}\otimes K_X.
\]
The last graded Higgs map is zero. 
The generic identifications \(G_{i,K}\simeq E_{i,K}\) intertwine
these graded Higgs maps with \(\theta_i\). Viewed as sections of the coherent
homomorphism sheaves on \(X_R\), their possible poles are supported on
the Cartier divisor \(t=0\). Hence, by coherence and quasi-compactness,
for each \(i\) some power of \(t\) clears all poles; taking the maximum
of these exponents gives one common power \(t^a\). The resulting maps
are injections
\[
 G_i\hookrightarrow E_{i,R}
\]
that still intertwine the graded Higgs maps.  Consequently
\[
 \HH:=\bigoplus_iG_i\hookrightarrow E_R
\]
is a coherent Higgs sublattice. Its quotient \(Q:=E_R/\HH\) is killed by
a power of \(t\).

Every coherent sheaf in use is perfect on the regular surface,
and its proper derived pushforward to \(R\) is perfect.
Determinant multiplicativity \cite{KnudsenMumford1976} along the
filtration gives
\[
 \lambda(\mathcal V_R)\simeq\bigotimes_i\lambda(G_i)=\lambda(\HH).
\]
Thus the determinant frame induced by the generic filtration and graded
identifications is the frame to which Lemma~\ref{lem:detdegree} applies.
Choose a small analytic disk \(\Delta\) around \(c_0\), with coordinate
\(t\), and set \(\Delta^*:=\Delta\setminus\{0\}\). After shrinking,
the algebraic generic filtration and graded identifications spread over
\(\Delta^*\). The family \((\mathcal V,\mathcal D)\) is holomorphic over \(\Delta\), and
the induced determinant frame \(u(e(t))\) is meromorphic at
\(0\). By Proposition~\ref{prop:valuation} and
Lemma~\ref{lem:detdegree}, we have
\[
\sum_j\deg Q_j=0.
\]
Hence Lemma~\ref{lem:scalarlattice} gives $\HH=t^mE_R.$ Because the embedding is block diagonal, each
\(G_i\) maps isomorphically to \(t^mE_{i,R}\). Divide every
identification by this same scalar \(t^m\); the resulting
isomorphisms \(G_i\simeq E_{i,R}\) still intertwine all the graded Higgs maps.
In particular every \(G_i\) is locally free. Successive extensions
then show that every \(\mathcal V_R/A^p\) is locally free. We now have a
genuine subbundle filtration, including on the special fiber, with
graded Higgs bundle \((E,\theta)\). Lemma~\ref{lem:rees} gives the
asserted special scaling limit.
\end{proof}
We first record an irreducibility observation that will be used later in the proof
of the main theorem.

Any flat bundle \((V,D)\) admitting the Simpson filtration
\(F^\bullet\) of Lemma~\ref{lem:rees} is \textbf{irreducible}. Indeed, a nonzero
proper invariant subbundle \(H\) has degree zero, and intersecting it with
\(F^\bullet\) gives a proper Higgs subsheaf
\[
 \Gr_{H\cap F}H\hookrightarrow\Gr_F V\simeq E,
 \qquad \deg\Gr_{H\cap F}H=\deg H=0.
\]
Its saturation is a nonzero proper Higgs subbundle of nonnegative degree, contradicting stability of \((E,\theta)\). Thus every point of \(W_{[(E,\theta)]}^1\) is represented by an irreducible
flat bundle. 
\begin{proof}[Proof of Theorem~\ref{thm:main}]
By \cite[Proposition~5.1]{Simpson2010},
\(W_{[(E,\theta)]}^1\) is locally closed, hence constructible. We use Simpson's algebraic framed parameter scheme with its universal family
and good quotient by a reductive group
\cite[Theorem~4.7 and Section~4]{Simpson1994I}. Thus we have
\[
 q:R^{\mathrm{ss}}\longrightarrow\Mdr(X,n),
 \qquad
 S:=q^{-1}(W_{[(E,\theta)]}^1),
\]
where \(R^{\mathrm{ss}}\) is the semistable framed parameter space in Simpson's
construction. By definition, \(S\) is invariant under the
reductive group action. Let
\[
 Z:=\overline{S}.
\]
Then \(Z\) is a closed invariant
subset.

Let \(y\) be a closed point of $\overline{W_{[(E,\theta)]}^1}$ and choose a closed
\(z\in Z\) above it. If \(z\in S\), there is nothing to prove.
Otherwise choose an irreducible component \(Z_0\) through \(z\).
Since \(S\) is open dense in \(Z\), \(S\cap Z_0\) is open dense in
\(Z_0\). Taking an irreducible component of general hyperplane
sections through \(z\) in an affine neighborhood gives an integral
curve through \(z\) not contained in \(Z_0\setminus S\).
Normalizing this curve and choosing a point over \(z\) gives a
smooth pointed algebraic curve \((C,c_0)\to Z\) whose generic point
lies in \(S\). Pull back the universal family to \(X\times C\). This is an actual algebraic family, not
merely a curve of coarse moduli points.

Write \(K:=\CC(C)\). Over \(\overline K\), Lemma~\ref{lem:rees} therefore supplies the actual
generic connection with a Griffiths-transverse filtration and graded
identification with \((E,\theta)_{\overline K}\).
By finite-presentation descent, the filtration inclusions and graded
isomorphisms, together with their inverses, descend from
\(\overline K\) to some finite extension \(K'/K\)
\cite[Tag~01ZR]{StacksProject}. The quotient
sheaves remain locally free by faithfully flat descent; exactness,
Griffiths transversality, and compatibility with \(\theta\) can all
be checked after extension to \(\overline K\). Normalize \(C\)
in \(K'\) and choose a point above \(c_0\). The pulled-back family remains defined there, while
the generic filtered data spread over a punctured neighborhood.
Proposition~\ref{prop:specialization} now places its special connection
in \(W_{[(E,\theta)]}^1\), so \(y\in W_{[(E,\theta)]}^1\).

Finally, a nonempty constructible subset of a finite-type complex
scheme contains a closed point
\cite[Tags~02J6 and~005X]{StacksProject}. Applied to
\(\overline{W_{[(E,\theta)]}^1}\setminus W_{[(E,\theta)]}^1\), this
shows that the boundary is empty, proving the theorem.
\end{proof}

\end{document}